\newtheorem{remark}{Remark}[section]
\newtheorem{lemma}{Lemma}[section]
\newtheorem{proposition}{Proposition}[section]
\newtheorem{theorem}{Theorem}[section]
\def\Var{\textup{Var}}
\def\Cov{\textup{Cov}}
\def\R{\mathbb{R}}
\def\C{\mathbb{C}}
\def\E{\mathbb{E}}
\def\P{\mathbb{P}}
\begin{document}
	\title{The site frequency spectrum for coalescing Brownian motion}
	\author{Yubo Shuai \\
		University of California at San Diego}
	\maketitle
	
	\footnote{{\it AMS 2020 subject classifications}.  Primary 60J80; Secondary 60J90, 92D15, 92D25}
	
	\footnote{{\it Key words and phrases}.  Coalescing Brownian motion, Site frequency spectrum}
	
	\vspace{-.6in}
	\begin{abstract}
		We consider an expanding population on the plane. The genealogy of a sample from the population is modelled by coalescing Brownian motion on the circle. We establish a weak law of large numbers for the site frequency spectrum in this model. A parallel  result holds for a localized version where the genealogy is modelled by coalescing Brownian motion on the line.
	\end{abstract}
	
	\section{Introduction}
	
	In population genetics, one is often interested in the mutations along the DNA sequences in a sample from a population. The site frequency spectrum is commonly used to summarize the mutational data. In a sample of size $n$, the site frequency spectrum consists of $M_m$ for $m=1,2,\dots, n-1$ where $M_m$ is the number of mutations inherited by exactly $m$ individuals in the sample. There is an extensive literature on the exact and asymptotic behavior of the site frequency spectrum for various population models. For models with fixed population size, Fu and Li \cite{fu1993statistical} computed the expected site frequency spectrum for a population whose genealogy is given by Kingman's coalescent. This computation was generalized to $\Lambda$-coalescents by Birkner, Blath and Eldon \cite{birkner2013statistical}. In the special case of the Bolthausen–Sznitman coalescent, Diehl and Kersting obtained laws of large numbers for the site frequency spectrum \cite{10.1214/19-AAP1462}. The computation for expected site frequency was further generalized to $\Xi$-coalescents by Spence, Kamm and Song \cite{spence2016site} and  Blath et al. \cite{blath2016site}. For models with exponentially growing population size, the asymptotics of the expected site frequency spectrum were
	obtained by Durrett \cite{durrett2013population} and the exact formula when the whole population is sampled was obtained by Gunnarsson, Leder and Foo \cite{gunnarsson2021exact}. Schweinsberg and Shuai established the asymptotic normality for the site frequency spectrum in \cite{schweinsberg2023asymptotics} based on the methods developed in \cite{johnson2023estimating}.
	
	The results mentioned above assume a well-mixed population and no spatial constraint is imposed. De and Durrett \cite{de2007stepping} considered the stepping stone model and observed that the there are more high frequency mutations due to the spatial structure. In this paper, we consider a population whose genealogy is modelled by coalescing Brownian motion with Poissonian mutations along the branches and establish a weak law of large numbers for the site frequency spectrum.  
	
	\subsection{An expanding population model}
	We think of a population on $\R^2$. The ancestor is located at the origin and the $k$th generation live on the circle with radius $k$ centered at the origin. To take the spatial structure into account, the offspring of an individual in the $k$th generation are located in a neighborhood of the parent, in the sense that the angular parts of the offspring and the parent are close. More formally, we fix some non-decreasing function $f:\R\rightarrow\R$ satisfying the periodic condition $f(x+1)=f(x)+1$ and let $\Theta_1,\Theta_2,\dots$ be a sequence of i.i.d random variables, uniformly distributed on $[0,1)$. 
	For an individual $x$ in the $(k+1)$st generation with angular part $\theta_x$, its parent in the $k$th generation has radial part $f_{\Theta_k}(\theta_x)=f(\theta_x-\Theta_k)+\Theta_k$. We are interested in the angular parts of the ancestral lineages of $x$, namely, $\Phi_{j,k}(\theta_x)=f_{\Theta_{j+1}}\circ\dots\circ f_{\Theta_{k}}(\theta_x)$ for $j\le k$. It is worth noting that, if $f$ is strictly increasing, then the angular parts of the ancestral lineages remain distinct. However, under proper scaling, these angular parts converge to the coalescing Brownian motion on the circle. Indeed, Norris and Turner \cite{norris2015weak} embedded $\Phi_{j,k}$ in a continuous time setting. They showed that for any sequence of functions $f_n$ that converges to the identity function appropriately, and for $(\theta_{x_n})_{n=1}^{\infty}\subset [0,1)$, the ancestral lineages of $(\theta_{x_n})_{n=1}^{\infty}$ converge weakly to the coalescing Brownian motion on the circle. Also, a localized version is shown to converge weakly to the coalescing Brownian motion on the line. We will therefore study the site frequency spectrum for population whose genealogy is given by coalescing Brownian motion on the circle or coalescing Brownian motion on the line.

	\subsection{Coalescing Markov process and coalescing Brownian motion}
	Coalescing Brownian motion was first studied by Arratia in his Ph.D. thesis \cite{arratia1979coalescing} at the University of Wisconsin, Madison. More generally, coalescing Markov processes were introduced by Donnelly et al. in the study of the stepping stone model in \cite{donnelly2000continuum}. Heuristically, we have $n$ distinct particles located at $(e_1, \dots , e_n)$ in some state space $E$. These particles evolve as independent Markov processes before two or more particles collide. When a collision occurs, those particles coalesce into one particle and then undergo the same dynamics. We will formalize this for coalescing Brownian motion on the real line or the circle in the next paragraph. For the general setting, we refer the reader to \cite{donnelly2000continuum}. 
	
	Throughout the rest of the paper, for each $x\in\R$, we write $W^x$ for the 1-dimensional Brownian motion starting from $x$. We also assume that $\{W^x\}_{x\in\R}$ are independent. Let the state space $E$ be either $\R$ or $\mathbb{S}^1=\{z\in \C: |z|=1\}$, which we call the linear and circular cases respectively. Using notation in Section 2 of \cite{donnelly2000continuum}, we take the initial positions
	$$
	\boldsymbol{e_n}=(e_{n,1},\dots, e_{n,n}):=\left\{\begin{array}{lc}
		(1/n,\dots, n/n),&\text{ if } E=\R,\\
		(\exp(2\pi i/n),\dots, \exp(2n\pi i/n)), &\text{ if } E=\mathbb{S}^1.
	\end{array}\right.
	$$
	If there is no collision, then the paths of these particles are
	$$
	\boldsymbol{Z^{e_n}}=(Z^{e_{n,1}},\dots, Z^{e_{n,n}}):=\left\{\begin{array}{lc}
		\left(W^{1/n},\dots, W^{n/n}\right),&\text{ if } E=\R,\\
		\left(\exp(2\pi i W^{1/n}),\dots, \exp(2\pi i W^{n/n})\right), &\text{ if } E=\mathbb{S}^1.
	\end{array}\right.
	$$
	To describe the dynamics with collisions, we introduce the coalescence times $\tau_{n,k}$ and the partitions $\Pi_{n,k}$ of $\{1,\dots, n\}$, where $i,j$ are in the same block of $\Pi_{n,k}$ if the $i$th particle coalesces with the $j$th particle no later than $\tau_{n,k}$. For each block of $\Pi_{n,k}$, we use the particle with the smallest index as the representative. Formally, we define $\tau_{n,k}$ and $\Pi_{n,k}$ inductively for $k=0,1,2,\dots, n-1$.  We take $\tau_{n,0}=0$ and $\Pi_{n,0}=\{\{1\},\dots,\{n\}\}$. Given $\tau_{n,k}$ and $\Pi_{n,k}$, we define 
	$$
	\tau_{n,k+1}:=\inf\left\{t>\tau_k: \exists A,A'\in\Pi_{n,k},\ A\neq A',\ Z^{e_{n,\min(A)}}_t=Z^{e_{n,\min(A')}}_t\right\}.
	$$
	Let $A,A'\in\Pi_{n,k}$ be the blocks coalescing at $\tau_{n,k+1}$, i.e. $Z^{e_{n,\min(A)}}_{\tau_{n,k+1}}=Z^{e_{n,\min(A')}}_{\tau_{n,k+1}}$. Then the partition $\Pi_{n,k+1}$ is obtained from $\Pi_{n,k}$ by merging the blocks of $A$ and $A'$:
	$$
	\Pi_{n,k+1}:=\left(\Pi_{n,k}\setminus\{A,A'\}\right) \cup \{A\cup A'\}.
	$$
	The actual position of the $i$th particle at time $t$, denoted by $\check{Z}^{e_{n,i}}_t$, is
	$$
	\check{Z}^{e_{n,i}}_t = Z^{e_{n,j}}_t, \text{ if } \tau_{n,k}\le t< \tau_{n,k+1},\ i\in A\in \Pi_{n,k},\text{ and }\ j=\min(A).
	$$
	For any $A\subseteq\{1,2,\dots,n\}$ with $|A|\ge 2$,
	we define the first coalescence time of $A$ as
	\begin{equation*}\label{Intro: def of coal time in S}
		\tau_{n,A} := \inf\{t\ge 0: \exists i,j\in A,\ i\neq j,\  \check{Z}^{e_{i}}_t=\check{Z}^{e_{j}}_t \}.
	\end{equation*}	
	If $A\subseteq\{1,2,\dots,n\}$ and $|A|=1$, we set $\tau_{n,A}=0$ by convention.
	
	\begin{remark}\label{Remark: equivalence in distribution}
		Note that for any $A\subset\{1,2,\dots,n\}$, the actual positions of the particles in $A$ evolve as independent Brownian motions before the time $\tau_{n,A}$. That is, if we set 
		$$\tau_{A}:= \inf\{t\ge 0: \exists i,j\in A,\ i\neq j,\  {Z}^{e_{i}}_t={Z}^{e_{j}}_t \},$$
		then $\left(\check{Z}^{e_{n,i}}_t, 0\le t\le \tau_{n,A}\right)_{i\in A}$ has the same distribution as $\left(Z^{e_{n,i}}_t, 0\le t\le \tau_{A}\right)_{i\in A}$ , although they may not be equal because of coalescence with with particles not in $A$. 
	\end{remark}
	
	\subsection{The site frequency spectrum of coalescing Brownian motion}
	Given the coalescing Brownian motion $(\check{Z}^{e_{n,i}})_{1\le i\le n}$, one can study the the corresponding genealogical tree on $E\times[0,+\infty)$, where the branches of the tree correspond to the trajectories of $(\check{Z}^{e_{n,i}})_{1\le i\le n}$. See Figure 1 for an example.
	
	\begin{figure}[h]
		\centering
		\begin{tikzpicture}[scale=0.75]
			\draw [very thick] (0,0)--(1.5,2.5);
			\draw [very thick] (1.5,0)--(2.25,1.25);
			\draw [very thick] (3,0)--(2.25,1.25);
			\draw [very thick] (4.5,0)--(5.25,0.75);
			\draw [very thick] (6,0)--(5.25,0.75);
			\draw [very thick] (7.5,0)--(6,2);
			
			\draw [very thick] (2.25,1.25)--(1.5,2.5);
			\draw [very thick] (5.25,0.75)--(6,2);
			
			\draw [very thick, red] (1.5,2.5)--(3.75,4);
			\draw [very thick] (6,2)--(3.75,4);
			
			\node at (0,-0.3){1};
			\node at (1.5,-0.3){2};
			\node at (3,-0.3){3};
			\node at (4.5,-0.3){4};
			\node at (6,-0.3){5};
			\node at (7.5,-0.3){6};
		\end{tikzpicture}
		\caption{The genealogical tree for coalescing Brownian motion on $E=\R$ and $n=6$. Brownian trajectories are represented by straight lines. The red branch supports 3 leaves, namely 1,2, and 3.} \label{fig:treeshape}
	\end{figure}
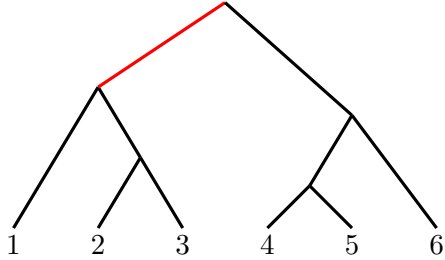
	
	The number of mutations inherited by $m$ individuals $M_m$ is directly related to the length of the branches supporting $m$ leaves in the genealogical tree $L_{n,m}$ (See Figure \ref{fig:treeshape}). Indeed, a mutation along a branch supporting $m$ leaves will be inherited by $m$ individuals in the sample. If we assume that mutations occur with rate $\nu$ along the branches, independently of the Brownian motion, then the conditional distribution of $M_m$ given $L_{n,m}$ is Poisson with mean $\nu L_{n,m}$. For this reason, we will focus on $L_{n,m}$ in this paper.
	
	In the linear case, we say that the $i$th branch supports $m$ leaves at time $t\in[\tau_{n,k},\tau_{n,k+1})$ if $i$ is a representative of a block of size $m$. That is,
	\begin{equation}\label{Intro: linear, def of length of the ith branch that supports m leaves}
		\exists A\in\Pi_{n,k} \text{ such that } |A|=m, \min(A)=i.    
	\end{equation}
	The length of the portion of the $i$th branch that supports $m$ leaves, denoted by $L_{n,i,m}$, is the length of the time interval when \eqref{Intro: linear, def of length of the ith branch that supports m leaves} holds. Note that the only block $A$ for which \eqref{Intro: linear, def of length of the ith branch that supports m leaves} could be true is $\{i,i+1,\dots, i+m-1\}$, provided $i+m-1\le n$. This block is in $\Pi_{n,k}$ if and only if particle $i$ coalesces with particle $i+m-1$, and particles $i$ and $i+m-1$ do not coalesce with any other particles outside of the block. Writing $x^+=\max(x,0)$, we have
	\begin{align}\label{Intro: linear, formula for length of the ith branch that supports m leaves}
		L_{n,i,m}=\left\{\begin{array}{lc}
			\left(\tau_{n,\{m,m+1\}}-\tau_{n,\{1,m\}}\right)^{+},\qquad & i=1,\\
			\left(\tau_{n,{\{i-1,i\}}}\wedge\tau_{n,\{i+m-1,i+m\}}-\tau_{n,\{i,i+m-1\}}\right)^+, \qquad & 2\le i\le n-m,\\
			\left(\tau_{n,\{n-m,n-m+1\}}-\tau_{n,\{n-m+1,n\}}\right)^{+}, \qquad &i = n-m+1,\\
			0,&i\ge n-m+2.
		\end{array}
		\right.
	\end{align}
	
	In the circular case, we want to respect the symmetry of $\mathbb{S}^1$ so that $L_{n,i,m}$ has the same distribution for all $i$. To do this, we identify an integer with its equivalence class in $\{1,\dots,n\}$ modulo $n$ and define $L_{n,i,m}$ to be the the time elapsed for which $\{i,i+1,\dots,i+m-1\}$ is in the partition. Formally, we have
	\begin{equation}\label{Intro: circular, def of length of the ith branch that supports m leaves on the cirlce}
		L_{n,i,m}= \left(\tau_{n,{\{i-1,i\}}}\wedge\tau_{n,\{i+m-1,i+m\}}-\tau_{n\{i,i+m-1\}}\right)^+, \qquad 1\le i\le n.
	\end{equation}
	For both the linear and the circular cases, the total length of the branches that support $m$, leaves, denoted by $L_{n,m}$ is 
	\begin{equation}\label{Intro: def of total length of branches that supports m leaves}
		L_{n,m}:=\sum_{i=1}^n L_{n,i,m}. 
	\end{equation}

	\begin{proposition}\label{Proposition: expected value, typical behavior}
		In the linear case with $2\le i \le n-m$, we have
		$$
		\E[L_{n,i,m}]=\frac{1}{n^2}.
		$$
	\end{proposition}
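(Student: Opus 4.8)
The plan is to reduce the whole quantity to two scalar gap processes governed by a single martingale identity. Write $T_1=\tau_{n,\{i,i+m-1\}}$ for the time the block $\{i,\dots,i+m-1\}$ finishes coalescing, and $T_2=\tau_{n,\{i-1,i\}}$, $T_3=\tau_{n,\{i+m-1,i+m\}}$ for the times the left and right neighbours join it, so that $L_{n,i,m}=(T_2\wedge T_3-T_1)^+$. Introduce the left and right gaps $G_L(t)=\check Z^{e_{n,i}}_t-\check Z^{e_{n,i-1}}_t$ and $G_R(t)=\check Z^{e_{n,i+m}}_t-\check Z^{e_{n,i+m-1}}_t$; both start at $1/n$ and both are nonnegative by order preservation on $\R$, and $T_2=\inf\{t:G_L(t)=0\}$, $T_3=\inf\{t:G_R(t)=0\}$. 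Set $\sigma:=T_2\wedge T_3$. Since particle $i$ stays the minimal index of its block until it coalesces leftward (which is exactly $T_2$), one has $\check Z^{e_{n,i}}_t=W^{i/n}_t$ for $t<T_2$, and similarly $\check Z^{e_{n,i+m}}_t=W^{(i+m)/n}_t$ for $t<T_3$; in particular $G_L,G_R$ are continuous martingales up to $\sigma$.

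The one computation that drives everything is the quadratic covariation of $(G_L,G_R)$. Because the blocks of $\Pi_{n,k}$ are contiguous intervals, each block moves as the Brownian motion indexed by its smallest element, and distinct blocks are driven by independent Brownian motions. For $t<T_1\wedge\sigma$ the four particles $i-1,i,i+m-1,i+m$ lie in four distinct blocks, so $G_L$ and $G_R$ are driven by disjoint families of Brownian motions and $d\langle G_L,G_R\rangle_t=0$, while $d\langle G_L\rangle_t=d\langle G_R\rangle_t=2\,dt$. Once the block has formed, i.e.\ for $T_1\le t<\sigma$, particles $i$ and $i+m-1$ share the block's Brownian motion $W^{i/n}$, which enters $G_L$ with a plus sign and $G_R$ with a minus sign, whence $d\langle G_L,G_R\rangle_t=-dt$.

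With this in hand, define
\[
M(t)=G_L(t\wedge\sigma)\,G_R(t\wedge\sigma)+\bigl(t\wedge\sigma-T_1\bigr)^+.
\]
By It\^o's formula the drift of $G_LG_R$ is $0$ before $T_1$ and $-dt$ after $T_1$, and this $-dt$ is cancelled exactly by the $+dt$ produced by $(t-T_1)^+$, so $M(\cdot)$ is a continuous local martingale. At $\sigma$ one of the two gaps vanishes, so $G_L(\sigma)G_R(\sigma)=0$ and $M(\sigma)=(\sigma-T_1)^+=L_{n,i,m}$, while $M(0)=(1/n)^2$. If optional stopping may be applied at $\sigma$, then $\E[L_{n,i,m}]=M(0)=1/n^2$.

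The main obstacle is precisely this application of optional stopping: the individual coalescence times have infinite mean and the gaps are unbounded, so I must rule out any loss of mass at infinity. One direction is free, since $M(\cdot\wedge\sigma)$ is a nonnegative local martingale, hence a supermartingale, and Fatou's lemma gives $\E[L_{n,i,m}]\le 1/n^2$ (in particular $L_{n,i,m}$ is integrable). For the matching lower bound it suffices to show $\E[G_L(t\wedge\sigma)G_R(t\wedge\sigma)]\to0$, and I would split on whether the block has formed by time $t$. On $\{T_1\le t\}$ the post-$T_1$ evolution of $(G_L,G_R)$ is the driftless planar diffusion with covariance $\left(\begin{smallmatrix}2&-1\\-1&2\end{smallmatrix}\right)$ in the open quadrant, whose generator is $\mathcal L=\partial_{xx}+\partial_{yy}-\partial_{xy}$; since $u(x,y)=xy$ solves $\mathcal Lu=-1$ with zero boundary data and the quadrant becomes a wedge of opening $\pi/3<\pi$ after diagonalising, the exit time has finite mean $G_L(T_1)G_R(T_1)$, and finiteness of that mean forces $\E[G_LG_R]\to0$ on this event. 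On the complement $\{T_1>t\}$ the block has not finished forming; here I would exploit that before $T_1$ the outer gaps are driven by disjoint families of Brownian motions, so that the event $\{\sigma>t,\,T_1>t\}$ demands an atypical non-collision whose probability decays fast enough to offset the at-most-linear growth of $\E[G_LG_R]$. Controlling this second term is the delicate step and is where the bulk of the technical work lies; combining the two cases yields $\E[G_L(t\wedge\sigma)G_R(t\wedge\sigma)]\to0$ and hence $\E[L_{n,i,m}]=1/n^2$.
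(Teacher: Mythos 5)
Your setup is correct and, at its core, is a repackaging of the paper's own argument: the covariation computation for $(G_L,G_R)$, the local martingale $M(t)=G_L(t\wedge\sigma)G_R(t\wedge\sigma)+(t\wedge\sigma-T_1)^+$, and the Fatou upper bound $\E[L_{n,i,m}]\le 1/n^2$ are all sound (the paper reaches the same identity via the strong Markov property at the first coalescence among $\{i-1,i,i+m-1,i+m\}$ plus a separate lemma, rather than one unified martingale). The problem is that the two places where your argument must do real work are exactly the places you leave unproved, and neither is routine. First, on $\{T_1\le t\}$ your inference is circular: It\^o/Dynkin on the unbounded quadrant yields only $\E[\sigma-T_1\mid \mathcal{F}_{T_1}]\le G_L(T_1)G_R(T_1)$, and \emph{finiteness} of this conditional mean does not force $\E[G_LG_R(t\wedge\sigma)\mathbbm{1}_{T_1\le t}]\to 0$; what forces it is the \emph{exact equality}, and a strict inequality is precisely the ``loss of mass at infinity'' you set out to exclude, so you cannot assume it. The paper closes this loop with a truncation device: It\^o applied to $B^1B^2(B^1+B^2-l)$, which vanishes on the entire boundary of the bounded region $\{x>0,\,y>0,\,x+y<l\}$, bounded convergence on that region, then $l\to\infty$. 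The bare PDE observation that $xy$ solves $\mathcal{L}u=-1$ does not deliver equality. (Also, finite mean for a cone exit time requires opening $<\pi/2$, not $<\pi$ as you state; this happens to hold since the opening is $\pi/3$, but the reason given is wrong.)

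Second, the case $\{\sigma>t,\,T_1>t\}$, which you explicitly defer as ``where the bulk of the technical work lies,'' is the heart of the proof and needs two specific inputs you do not supply. (i) A quantitative non-collision bound: the naive pairwise estimate $\P(\sigma>t)\asymp t^{-1/2}$ does \emph{not} offset the linear growth of $\E[G_LG_R(t)]$; one needs that simultaneous non-collision of the three relevant pairs has probability at most $Cn^{-3}t^{-3/2}$, which the paper obtains by mapping the correlated gap process to a standard planar Brownian motion in a cone of opening $\pi/3$ and invoking the Garbit--Raschel exit-time asymptotics. (ii) Even granted the $t^{-3/2}$ bound, Cauchy--Schwarz gives $\E[G_LG_R(t)\mathbbm{1}_A]\le Ct\sqrt{\P(A)}\asymp t^{1/4}\to\infty$, so ``decays fast enough to offset'' cannot be cashed in that way; the paper instead dominates $G_LG_R(t)$ stochastically by an exponential variable $Y_t$ of mean $\asymp t$ and computes $\E[Y_t\mathbbm{1}_A]\lesssim t\,\P(A)\log(1/\P(A))\lesssim t^{-1/2}\log t$. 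A smaller omission: your reduction ``it suffices to show $\E[G_L(t\wedge\sigma)G_R(t\wedge\sigma)]\to0$'' presupposes that $M(\cdot\wedge\sigma)$ is a true martingale on compact intervals, not merely a local one; this is fixable (each coalesced path is marginally a Brownian motion, so $\E[G_L(s)^2]\le C(1+s)$ and the stochastic integrals are genuine martingales), but it must be said. In short, the skeleton is right, but the proposal omits precisely the two estimates that constitute the proof.
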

	If we sum over $2\le i \le n-m$, then we get 
	$$\E\left[\sum_{i=2}^{n-m} L_{n,i,m}\right] = \frac{n-m}{n^2},$$
	which means we get a triangular shape for the expected site frequency spectrum if we ignore $i=1$ and $i=n-m+1$.  For $i=1$ and $n-m+1$, the corresponding branch lengths have infinite mean and therefore $E[L_{n,m}]=\infty$. However, the next theorem says these branches do not have a major effect when we consider the typical behavior of the total branch length.

	\begin{theorem}\label{Theorem: main result}
		In both the linear and circular cases, let $m$ be a fixed positive integer. Let $L_{n,m}$ be defined as in \eqref{Intro: def of total length of branches that supports m leaves}. Then  $n L_{n,m}$ converges to 1 in probability as $n$ goes to infinity.
	\end{theorem}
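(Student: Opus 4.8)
The plan is to prove the statement in the linear case and then transfer it to the circle by a local comparison. Throughout I pass to the rescaled system $B^i_s := n\,W^{i/n}_{s/n^2}$, which by Brownian scaling consists of independent standard Brownian motions started at the integers $1,\dots,n$ and whose coalescence times are $n^2$ times those of the original system. Writing $\tilde L_{i,m}:=n^2L_{n,i,m}$, Proposition~\ref{Proposition: expected value, typical behavior} reads $\E[\tilde L_{i,m}]=1$ for $2\le i\le n-m$, and the quantity to control becomes $nL_{n,m}=\tfrac1n\sum_{i=1}^n\tilde L_{i,m}$. I would split this into the \emph{boundary} contribution ($i=1$ and $i=n-m+1$ in the linear case) and the \emph{bulk} contribution ($2\le i\le n-m$). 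For the boundary, the formula \eqref{Intro: linear, formula for length of the ith branch that supports m leaves} gives $L_{n,1,m}\le\tau_{n,\{m,m+1\}}$ and $L_{n,n-m+1,m}\le\tau_{n,\{n-m,n-m+1\}}$. By Remark~\ref{Remark: equivalence in distribution} each of these adjacent coalescence times equals in distribution the meeting time of two independent Brownian motions a distance $1/n$ apart, i.e.\ $n^{-2}\tau^\ast$ where $\tau^\ast$ is the meeting time at distance $1$ and does not depend on $n$. Hence $nL_{n,1,m}$ and $nL_{n,n-m+1,m}$ are each stochastically dominated by $\tau^\ast/n\to0$ in probability, so the boundary is negligible even though its expectation is infinite.

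For the bulk I would use a truncated second moment method, truncation being forced by the fact that the bulk branch lengths have infinite variance. The key geometric input is that, once the block $\{i,\dots,i+m-1\}$ has formed, the gaps to its left and right neighbours form a continuous planar process which, by Remark~\ref{Remark: equivalence in distribution}, the continuity of coalescing paths, and L\'evy's characterisation, is a Brownian motion with covariance $\bigl(\begin{smallmatrix}2&-1\\-1&2\end{smallmatrix}\bigr)$, irrespective of further coalescences among the neighbours. The branch is destroyed when this motion leaves the positive quadrant; after diagonalisation this is the exit from a wedge of opening angle $\pi/3$, so $\P(\tilde L_{i,m}>t)\lesssim t^{-3/2}$, uniformly in $n$ and in bulk $i$. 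Fixing a level $K$ and setting $\tilde L_{i,m}^{(K)}:=\tilde L_{i,m}\wedge K$, this tail gives $\E[(\tilde L_{i,m}-K)^+]\le\rho_K$ with $\rho_K\to0$, so the non-negative truncated-off part $n\sum_{i=2}^{n-m}(L_{n,i,m}-n^{-2}\tilde L_{i,m}^{(K)})$ has expectation at most $\rho_K$ and is negligible in probability uniformly in $n$ once $K$ is large. For the truncated bulk itself Proposition~\ref{Proposition: expected value, typical behavior} yields
$$
\E\Bigl[\,\tfrac1n\sum_{i=2}^{n-m}\tilde L_{i,m}^{(K)}\Bigr]=\tfrac1n\sum_{i=2}^{n-m}\bigl(1-\E[(\tilde L_{i,m}-K)^+]\bigr)\longrightarrow 1
$$
as first $n\to\infty$ and then $K\to\infty$.

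It remains to show $\Var\bigl(\sum_{i=2}^{n-m}\tilde L_{i,m}^{(K)}\bigr)=o(n^2)$, so that the rescaled variance $n^{-2}\Var(\sum_iL_{n,i,m}^{(K)})$ vanishes and Chebyshev applies; this is the heart of the argument. The diagonal contributes at most $nK$, and I would control the covariances by a \emph{finite-range approximation}: let $\hat L_{i,m}^{(K)}$ be the truncated branch length computed in the system restricted to particles within distance $R=C\sqrt{K\log n}$ of $i$. Since the branch resolves by time $K$ and a Brownian motion starting outside the window reaches its centre by time $K$ with probability $e^{-\Omega(R^2/K)}=n^{-\Omega(C^2)}$, a union bound gives $\P(\tilde L_{i,m}^{(K)}\ne\hat L_{i,m}^{(K)})\le n^{-\gamma}$ for $C$ large. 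Disjoint windows ($|i-j|>2R$) give independent $\hat L^{(K)}$, so there $|\Cov(\tilde L_{i,m}^{(K)},\tilde L_{j,m}^{(K)})|\lesssim K^2n^{-\gamma}$, summing to $o(n^2)$ over all pairs, while the $O(nR)$ near-diagonal pairs each have covariance at most $K$, contributing $O(nK^{3/2}\sqrt{\log n})=o(n^2)$. I expect the main obstacle to be precisely making this locality rigorous: one must verify that, on the high-probability event that no outside particle enters the window, the restricted and full systems produce identical branch lengths near $i$, which requires arguing that coalescences cannot propagate inward without a particle physically crossing the window boundary.

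Combining the three pieces—boundary negligible, truncated-off part negligible uniformly in $n$, and truncated bulk concentrating at $1$—and sending $n\to\infty$ followed by $K\to\infty$ gives $nL_{n,m}\to1$ in probability in the linear case. In the circular case there are no boundary terms and all $L_{n,i,m}$ are identically distributed; since for fixed $m$ a branch interacts with only $O(1)$ neighbouring particles before it is destroyed, the same finite-range approximation shows that the circular branch law converges to the bulk linear law, so $\E[\tilde L_{i,m}]\to1$ and the identical truncated second moment argument goes through.
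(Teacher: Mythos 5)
Your overall architecture---boundary branches dominated by an adjacent coalescence time, bulk handled by truncation plus local independence and Chebyshev, circle reduced to the line by a locality comparison---is the same as the paper's, and several ingredients are correct: the boundary estimate, the $\pi/3$-cone tail bound $\P(\tilde L_{i,m}>t)\le C_m t^{-3/2}$ (this is exactly Lemma \ref{Lemma: tail distribution for a single branch}), the mean computations, and the bookkeeping of the double limit $n\to\infty$ then $K\to\infty$. The genuine gap is in the finite-range approximation, which you yourself single out as the heart of the argument. Your bound $\P(\tilde L^{(K)}_{i,m}\ne\hat L^{(K)}_{i,m})\le n^{-\gamma}$ rests on the premise that ``the branch resolves by time $K$.'' For $m\ge 2$ that premise is false: truncating the \emph{length} of the branch at $K$ does not truncate the \emph{time interval} on which the branch lives. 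The value $\tilde L_{i,m}\wedge K$ is determined only at time $\min(D_i,\,F_i+K)$, where $F_i$ is the (rescaled) formation time of the block $\{i,\dots,i+m-1\}$ and $D_i$ its destruction time; $F_i$ has tail of order $t^{-1/2}$ when $m\ge2$ and $D_i$ has tail of order $t^{-3/2}$, so $\tilde L^{(K)}_{i,m}$ is not measurable with respect to the evolution up to time $K$, nor up to any deterministic time. (When $m=1$ one has $F_i=0$ and your premise, hence your estimate, is fine; the problem is all $m\ge 2$.) Consequently the discrepancy between the windowed and full systems is governed not by whether an outside particle can cross the window within time $K$, but by whether it can do so before the unbounded time $D_i$. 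Since the cone asymptotics of Lemma \ref{Lemma: tail distribution for exit time from a cone} are sharp, $\P(D_i>R^2)\asymp R^{-3}$; at your window radius $R=C\sqrt{K\log n}$ this is of order $(K\log n)^{-3/2}$, and on that event outside particles have typically already mixed into the window. So the best discrepancy bound obtainable at that window size is polylogarithmic in $n$, and the claimed polynomial bound $n^{-\gamma}$ is unattainable.

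The missing idea is precisely the paper's: truncate in \emph{time}, not only in length, and match the spatial window to the time cap diffusively with polynomial room to spare. The paper caps the destruction time at $n^{-4/3+\epsilon}$, which costs only $\P\left(\tau_{n,\{i-1,i\}}\wedge\tau_{n,\{i+m-1,i+m\}}>n^{-4/3+\epsilon}\right)\le Cn^{-1-3\epsilon/2}$ by Lemma \ref{Lemma: tail distribution for a single branch} and makes the truncated length a functional of the system up to that time; it then takes windows $N_n(i)$ of spatial radius $n^{-2/3+\epsilon}$, so that an outside particle reaching the relevant region within the time cap is a Gaussian event of probability $\le Cn^{-2-3\epsilon/2}$ (Lemma \ref{Lemma: reflection principle}), leaving room for union bounds over the $n$ outside particles and the $n$ sites; this is the content of Lemma \ref{Lemma: coupling, linear subclone and linear population}. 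With this truncation the second moment is $\E[(\widetilde L_{n,i,m})^2]\le Cn^{-11/3+\epsilon/2}$, only the $\sim n\cdot n^{1/3+\epsilon}$ overlapping pairs contribute to the variance, and one gets $\Var\le Cn^{-1/3+3\epsilon/2}\to 0$. Your scheme could in principle be salvaged while keeping the fixed-$K$ truncation---insert an auxiliary time cap $t\asymp R^2/\log R$ into the discrepancy estimate, yielding a bound of order $R^{-3}(\log R)^{3/2}$, which summed against $K^2$ over all pairs is still $o(n^2)$ for fixed $K$---but that auxiliary cap is exactly the time-truncation argument your write-up omits, and it is where all the work lies. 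The same gap propagates into your circular-case paragraph, which invokes the same finite-range claim both to transfer the mean and to run the variance bound; the paper instead proves explicit couplings with the time cap in place (Lemma \ref{Lemma: coupling, linear population and circular population}) together with Remark \ref{Remark: edges effects ignored} to dispose of the wrap-around indices.
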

	
	We will focus on the proof of Theorem \ref{Theorem: main result} in the linear case and then deduce the result for the circular case from the linear case. Throughout the rest of the paper, unless otherwise specified, $C=C_m$ will be some constant which may depend on $m$ and vary from line to line.

	\section{Results for Brownian motion}
	In this section we summarize some results about Brownian motion. For Lemmas \ref{Lemma: reflection principle} and \ref{Lemma: 1-dim BM}, we refer the reader to Sections 7.4 and 7.5 of \cite{durrett2019probability}.
	\begin{lemma}[Reflection Principle]\label{Lemma: reflection principle}
		For every $x>0$ and $t\ge 0$,
		$$\P\left(\max_{0\le s\le t}W^0_t \ge x\right)=\P(|W^0_t|\ge x)\le C\frac{\sqrt{t}}{x}\exp\left(-\frac{x^2}{2t}\right).$$
	\end{lemma}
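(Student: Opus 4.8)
The plan is to establish the equality via the classical reflection principle and then obtain the inequality from a standard Gaussian tail estimate. For the equality, I would introduce the first passage time $\tau_x=\inf\{s\ge 0: W^0_s=x\}$ and observe, by continuity of Brownian paths and the intermediate value theorem, that $\{\max_{0\le s\le t}W^0_s\ge x\}=\{\tau_x\le t\}$ and that $\{W^0_t\ge x\}\subseteq\{\tau_x\le t\}$. The crucial step is the reflection argument: applying the strong Markov property at the stopping time $\tau_x$, the post-$\tau_x$ increment $W^0_t-W^0_{\tau_x}=W^0_t-x$ is, conditionally on $\tau_x=u\le t$, distributed as a mean-zero Gaussian and hence symmetric about $0$. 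This gives $\P(W^0_t\ge x\mid\tau_x\le t)=\tfrac12$, so that $\P(W^0_t\ge x)=\P(W^0_t\ge x,\tau_x\le t)=\tfrac12\P(\tau_x\le t)$, i.e. $\P(\tau_x\le t)=2\P(W^0_t\ge x)$. Finally, the symmetry $W^0_t\stackrel{d}{=}-W^0_t$ yields $2\P(W^0_t\ge x)=\P(W^0_t\ge x)+\P(W^0_t\le -x)=\P(|W^0_t|\ge x)$, which is the asserted identity.

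For the inequality, I would use that $W^0_t\sim N(0,t)$ and write $\P(|W^0_t|\ge x)=\tfrac{2}{\sqrt{2\pi t}}\int_x^\infty e^{-y^2/(2t)}\,dy$. The bound follows by dominating the integrand: for $y\ge x>0$ we have $1\le y/x$, so $\int_x^\infty e^{-y^2/(2t)}\,dy\le \tfrac{1}{x}\int_x^\infty y\,e^{-y^2/(2t)}\,dy=\tfrac{t}{x}e^{-x^2/(2t)}$. Combining the two displays gives $\P(|W^0_t|\ge x)\le \sqrt{2/\pi}\,\tfrac{\sqrt{t}}{x}e^{-x^2/(2t)}$, which is the claimed estimate with $C=\sqrt{2/\pi}$.

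This result is entirely standard, so I do not expect a genuine obstacle. The only point requiring care is the justification of the reflection step: one must invoke the strong Markov property at $\tau_x$ together with path continuity to guarantee that $W^0_{\tau_x}=x$ exactly, so that the reflected increment is genuinely symmetric about $0$ and the conditional probability equals $\tfrac12$. Everything beyond that is a routine Gaussian tail computation.
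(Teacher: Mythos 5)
Your proof is correct: the strong Markov/reflection argument at $\tau_x$ yields the identity $\P\bigl(\max_{0\le s\le t}W^0_s\ge x\bigr)=2\P(W^0_t\ge x)=\P(|W^0_t|\ge x)$, and your Gaussian tail estimate gives the stated bound with the explicit constant $C=\sqrt{2/\pi}$. The paper does not prove this lemma at all --- it simply cites Sections 7.4 and 7.5 of Durrett's textbook --- and your argument is precisely the standard one found there, so the two approaches coincide.
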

	
	\begin{lemma} \label{Lemma: 1-dim BM}
		For $x\in \R$, let 
		$$
		T_{x}=\inf\{t\ge 0: W^0_t=x \},
		$$
		be the hitting time of $x$ for a 1-dimensional Brownian motion. Then for $a,b>0$, 
		$$
		\E[T_a\wedge T_{-b}]=ab.
		$$
	\end{lemma}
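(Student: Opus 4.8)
The plan is to recognize $T_a\wedge T_{-b}$ as the first exit time $\tau$ of the Brownian motion $W^0$ from the open interval $(-b,a)$, and to extract both the exit distribution and the expected exit time from two standard martingales evaluated at $\tau$ via the optional stopping theorem. The two martingales are $W^0_t$ itself and the quadratic martingale $(W^0_t)^2-t$.

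First I would compute the probability $p:=\P(T_a<T_{-b})$ that the motion reaches $a$ before $-b$. Since the stopped process $W^0_{t\wedge\tau}$ takes values in $[-b,a]$, it is a bounded martingale and hence uniformly integrable, so optional stopping gives $\E[W^0_\tau]=W^0_0=0$. On the event $\{\tau<\infty\}$, which has full probability by recurrence of Brownian motion, we have $W^0_\tau\in\{a,-b\}$, so $0=ap-b(1-p)$, yielding $p=b/(a+b)$ and $1-p=a/(a+b)$.

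Next I would apply optional stopping to $(W^0_t)^2-t$. Because $\tau$ is unbounded I cannot invoke optional stopping directly; instead I would work with the bounded stopping times $\tau\wedge N$, for which $\E[(W^0_{\tau\wedge N})^2]=\E[\tau\wedge N]$. Since $(W^0_{\tau\wedge N})^2\le\max(a^2,b^2)$ uniformly in $N$, monotone convergence gives $\E[\tau]=\lim_N\E[\tau\wedge N]\le\max(a^2,b^2)<\infty$; in particular $\tau$ is integrable. Then, using path continuity and $\tau<\infty$ almost surely to see that $W^0_{\tau\wedge N}\to W^0_\tau$, bounded convergence on the right together with monotone convergence on the left let me pass to the limit and obtain $\E[\tau]=\E[(W^0_\tau)^2]$.

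Finally I would combine the two computations. Since $W^0_\tau$ equals $a$ with probability $p$ and $-b$ with probability $1-p$,
$$
\E[(W^0_\tau)^2]=a^2\cdot\frac{b}{a+b}+b^2\cdot\frac{a}{a+b}=\frac{ab(a+b)}{a+b}=ab,
$$
which gives $\E[T_a\wedge T_{-b}]=ab$. The only delicate point is the justification of optional stopping at the unbounded time $\tau$; this is handled cleanly by the truncation-and-limit argument above, which simultaneously establishes the finiteness of $\E[\tau]$ needed to make the limiting steps valid.
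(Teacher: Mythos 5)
Your proof is correct, and the paper gives no proof of its own for this lemma, deferring instead to Sections 7.4--7.5 of Durrett's \emph{Probability: Theory and Examples}, where precisely your argument appears: optional stopping applied to $W^0_t$ for the exit probabilities and to the quadratic martingale $(W^0_t)^2-t$ (with the truncation at $\tau\wedge N$ and monotone/bounded convergence) for the expected exit time. Your handling of the unbounded stopping time is the standard justification, so this is essentially the same approach as the cited source.
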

	
	In Example 1 of \cite{garbit2014exit}, Garbit and Raschel computed the asymptotics of the tail distribution of the exit time of a cone for a 2-dimensional Brownian motion:
	\begin{lemma}\label{Lemma: tail distribution for exit time from a cone}
		For each nonzero $(x,y)\in\R^2$, let $\phi(x,y)$ be the angle between $(x,y)$ and $(1,0)$, i.e. $\phi(x,y)=arccos(x/\sqrt{x^2+y^2})\in[0,\pi]$. For $\theta\in(0,2\pi]$, we define the cone $\mathcal{C}_{\theta}$ 
		$$
		\mathcal{C}_{\theta}:=\{(x,y)\in \R^2 :\phi(x,y)< \theta/2\}.
		$$
		For any $(a,b)\in \mathcal{C}_\theta$, let 
		$$
		T_{a,b} = \inf\left\{t\ge 0: \phi\left(W^a_t, W^b_t\right)=\theta/2\right\}
		$$
		be the exit time of $\mathcal{C}_{\theta}$ for the 2-dimensional Brownian motion starting from $(a,b)$. Then, using the notation $f(t)\sim g(t)$ to mean that $\lim_{t\rightarrow\infty} f(t)/g(t)=1$, we have
		$$
		\P(T_{a,b}>t)\sim C_{a,b} t^{-\pi/(2\theta)}, \qquad\text{ for some constant } C_{a,b}.
		$$
	\end{lemma}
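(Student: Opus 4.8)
The plan is to reduce the statement to the behaviour of the Brownian motion started near the apex of the cone by exploiting self-similarity, and then to read off the power from the degree of homogeneity of the ground-state harmonic function of $\mathcal C_\theta$. First I would record the Brownian scaling identity: for $c>0$ the process $\bigl(W^a_{cs},W^b_{cs}\bigr)_{s\ge0}$ has the same law as $\sqrt c\,\bigl(W^{a/\sqrt c}_{s},W^{b/\sqrt c}_{s}\bigr)_{s\ge0}$, and the cone is invariant under the dilation $(x,y)\mapsto\sqrt c\,(x,y)$, so membership in $\mathcal C_\theta$ is preserved. Taking $c=t$ and rescaling time gives
\begin{equation*}
	\P(T_{a,b}>t)=\P\bigl(T_{a/\sqrt t,\,b/\sqrt t}>1\bigr).
\end{equation*}
Hence it suffices to understand the survival probability up to time $1$ as the starting point $(a,b)/\sqrt t$ approaches the apex $0$, and the homogeneity of the limiting profile will produce the power of $t$.

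Next I would bring in the harmonic function governing the behaviour near the apex. Writing $z=x+iy$ and using the principal branch, set
\begin{equation*}
	h(x,y):=\Re\bigl(z^{\pi/\theta}\bigr)=r^{\pi/\theta}\cos\!\left(\tfrac{\pi}{\theta}\,\phi(x,y)\right),\qquad r=\sqrt{x^2+y^2}.
\end{equation*}
Then $h$ is harmonic on $\mathcal C_\theta$, strictly positive inside, and vanishes on $\partial\mathcal C_\theta$, because the angular profile $\psi\mapsto\cos(\tfrac{\pi}{\theta}\psi)$ is the ground state of $-\tfrac12\partial_\psi^2$ with Dirichlet conditions on the signed-angle interval $(-\theta/2,\theta/2)$, with eigenvalue $\tfrac12(\pi/\theta)^2$; the exponent $\pi/\theta$ is exactly what forces the boundary values to vanish at $\psi=\pm\theta/2$. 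Crucially $h$ is homogeneous of degree $\pi/\theta$, so $h\bigl((a,b)/\sqrt t\bigr)=t^{-\pi/(2\theta)}h(a,b)$. The analytic core of the proof is the sharp equivalence
\begin{equation*}
	\P(T_{w_1,w_2}>1)\sim \kappa\,h(w_1,w_2)\qquad\text{as }w=(w_1,w_2)\to 0\text{ inside }\mathcal C_\theta,
\end{equation*}
for some $\kappa>0$; granting it and combining with the scaling identity yields $\P(T_{a,b}>t)\sim\kappa\,h(a,b)\,t^{-\pi/(2\theta)}$, i.e.\ the claim with $C_{a,b}=\kappa\,h(a,b)$. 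To obtain the equivalence I would use the Dirichlet heat kernel of $\tfrac12\Delta$ on $\mathcal C_\theta$, which separates in polar coordinates into a Bessel radial factor and the angular Dirichlet eigenfunctions on $(-\theta/2,\theta/2)$; integrating over the cone and isolating the lowest angular mode produces both the power and the limiting constant, the positivity and simplicity of the ground state guaranteeing $\kappa>0$.

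The main obstacle is precisely this sharp equivalence: extracting the exact limit and constant rather than merely matching orders $\P(T_{w}>1)\asymp h(w)$, which alone would only give $\P(T_{a,b}>t)\asymp t^{-\pi/(2\theta)}$. Controlling the error requires the spectral gap between the lowest angular eigenvalue $\tfrac12(\pi/\theta)^2$ and the next one, together with uniform estimates on the radial Bessel contribution as $w\to0$. An alternative route that makes the exponent transparent is the conformal map $z\mapsto z^{\pi/\theta}$, which sends $\mathcal C_\theta$ onto the right half-plane $\{\Re w>0\}$ and turns the exit problem into the hitting time of the imaginary axis; since the real part of a planar Brownian motion is a one-dimensional Brownian motion, started here at $\Re\bigl((a+ib)^{\pi/\theta}\bigr)=h(a,b)$, its hitting time of $0$ has the stable-$\tfrac12$ tail $\sim c\,h(a,b)\,s^{-1/2}$, which is reassuringly consistent with $C_{a,b}=\kappa\,h(a,b)$. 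The catch is that conformal invariance holds only up to the time change $s\mapsto\int_0^s|f'(W_u)|^2\,du$, so this approach transfers the whole difficulty onto a precise analysis of that clock, which is exactly what one must handle with care to recover the constant $C_{a,b}$ in the original time parametrization.
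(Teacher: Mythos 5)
The paper offers no proof of this lemma: it is imported verbatim from Example 1 of Garbit and Raschel \cite{garbit2014exit}, and the only thing the paper adds on its own is the Brownian-scaling consequence recorded in Remark \ref{Remark: tail distribution for exit time from a cone}. So the comparison here is not against an internal argument but against the question of whether your sketch would stand on its own as a proof of the cited result.

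Your reduction is correct and is the standard skeleton for results of this type. The scaling identity $\P(T_{a,b}>t)=\P(T_{a/\sqrt t,\,b/\sqrt t}>1)$ holds because the cone is dilation-invariant; $h=\Re(z^{\pi/\theta})$ (principal branch, legitimate for every $\theta\in(0,2\pi]$ since $\mathcal{C}_\theta$ omits the negative real axis) is positive harmonic on $\mathcal{C}_\theta$, vanishes on the boundary, and is homogeneous of degree $\pi/\theta$, which converts the scaling identity into the exponent $t^{-\pi/(2\theta)}$ exactly as you say. The genuine gap is the step you yourself label the analytic core: $\P(T_w>1)\sim\kappa\,h(w)$ as $w\to0$ along a fixed ray. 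Observe that, through your own scaling identity with $w=(a,b)/\sqrt t$ and the homogeneity $h(w)=t^{-\pi/(2\theta)}h(a,b)$, this equivalence is not an auxiliary fact but an exact restatement of the theorem being proved, with $C_{a,b}=\kappa h(a,b)$; it is where all the work in DeBlassie- or Garbit--Raschel-type arguments lives. Your plan for it (separate variables in the Dirichlet heat kernel of the cone, isolate the lowest angular mode, control the Bessel radial factors and higher modes via the spectral gap) is the right one, but none of the estimates are carried out: one must show the ground-mode term contributes $\kappa h(w)(1+o(1))$ and that the sum over all higher modes is $o(h(w))$, with enough uniformity to interchange the limit with the mode sum and the radial integral. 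As written, the proposal is a correct reduction plus an appeal to the statement itself. Two points in your favor: you correctly identify that the conformal-map shortcut founders on the time change $\int_0^s|f'(W_u)|^2\,du$ rather than pretending conformal invariance preserves the clock; and for this paper's purposes the sharp asymptotic is overkill, since the lemma is consumed only through Remark \ref{Remark: tail distribution for exit time from a cone} in the proof of Lemma \ref{Lemma: tail distribution for a single branch}, where the one-sided bound $\P(T_{a,b}>t)\le C_{a,b}\,t^{-\pi/(2\theta)}$ suffices --- and that follows from the order-matching estimate $\P(T_w>1)\le C\,h(w)$, which you rightly observe is substantially easier than the sharp equivalence.
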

	\begin{remark}\label{Remark: tail distribution for exit time from a cone}
		Using Brownian scaling, we have
		$$
		\P(T_{a/n,b/n}>t)\sim C_{a,b} (n^2 t)^{-\pi/(2\theta)}, \qquad\text{ for some constant } C_{a,b}.
		$$
	\end{remark}
	
	\begin{lemma}\label{Lemma: expected value of the external branch length, linear}
		For real numbers $x<y<z$, let $W^x$, $W^y$, and $W^z$ be independent one-dimensional Brownian motions starting from $x$, $y$, and $z$ respectively. Let $\tau_{\{x,y\}}:=\inf \{t\ge 0: W^x_t=W^y_t\}$ and $\tau_{\{y,z\}}:=\inf \{t\ge 0: W^y_t=W^z_t\}$. Then
		$$
		\E[\tau_{\{x,y\}}\wedge \tau_{\{y,z\}}] = (z-y)(y-x).
		$$
	\end{lemma}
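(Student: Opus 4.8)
The plan is to reduce the two coalescence times to the exit time of a planar diffusion from the first quadrant and then to produce an explicit local martingale. Set $U_t := W^y_t - W^x_t$ and $V_t := W^z_t - W^y_t$, so that $U_0 = y-x>0$, $V_0 = z-y>0$, and
$$
\tau := \tau_{\{x,y\}}\wedge\tau_{\{y,z\}}
$$
is exactly the first time the pair $(U_t,V_t)$ leaves the open first quadrant. Since the three driving motions are independent, $(U,V)$ is a Brownian motion with diffusion matrix $\bigl(\begin{smallmatrix}2&-1\\-1&2\end{smallmatrix}\bigr)$; in particular $\langle U,V\rangle_t = -t$.

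The key observation is that $M_t := U_tV_t + t$ is a continuous local martingale. Indeed, Itô's formula gives $d(U_tV_t) = U_t\,dV_t + V_t\,dU_t + d\langle U,V\rangle_t = U_t\,dV_t + V_t\,dU_t - dt$, so the finite-variation part of $U_tV_t + t$ cancels. (This is the probabilistic form of the fact that $g(u,v)=uv$ solves $\mathcal{L}g=-1$ with $g=0$ on the boundary of the quadrant, where $\mathcal{L}=\partial_u^2-\partial_u\partial_v+\partial_v^2$ is the generator of $(U,V)$.) At time $\tau$ one factor of $U_\tau V_\tau$ vanishes, so $M_\tau=\tau$. If optional stopping applies, then $\E[\tau]=\E[M_\tau]=M_0=(y-x)(z-y)$, the claimed value.

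The remaining—and main—difficulty is to justify the optional stopping. Because $U_tV_t\ge 0$ on $[0,\tau)$, we have $M_{t\wedge\tau}\ge t\wedge\tau\ge 0$; stopping the local martingale at $t\wedge\tau$ (after a routine localization making it a genuine martingale) yields $\E[M_{t\wedge\tau}]=M_0$, whence $\E[t\wedge\tau]\le M_0$, and monotone convergence gives the easy inequality $\E[\tau]\le(y-x)(z-y)$. For the reverse inequality I must show that the boundary contribution $\E[U_{t\wedge\tau}V_{t\wedge\tau}]=\E[U_tV_t\mathbf{1}_{\tau>t}]$ tends to $0$, equivalently that the family $\{U_{t\wedge\tau}V_{t\wedge\tau}\}_{t\ge 0}$ is uniformly integrable (it already converges to $0$ almost surely once $\tau<\infty$ a.s.). I expect this to be the technical heart of the argument rather than the martingale algebra.

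This is precisely where Lemma \ref{Lemma: tail distribution for exit time from a cone} enters. Diagonalizing the diffusion matrix shows that a linear change of variables turns $(U,V)$ into a standard planar Brownian motion and the first quadrant into a cone; a short computation with $\Sigma^{-1}=\tfrac13\bigl(\begin{smallmatrix}2&1\\1&2\end{smallmatrix}\bigr)$ gives $\cos$ of the opening angle equal to $1/2$, so the cone has angle $\pi/3$ and Lemma \ref{Lemma: tail distribution for exit time from a cone} yields $\P(\tau>t)=O(t^{-3/2})$. Since $U_t$ and $V_t$ are Gaussian with the diffusive scaling $\E[|U_t|^k]+\E[|V_t|^k]=O(t^{k/2})$, a Hölder bound
$$
\E\!\left[(U_tV_t)^{1+\delta}\mathbf{1}_{\tau>t}\right]\le\left(\E\!\left[(U_tV_t)^{(1+\delta)p}\right]\right)^{1/p}\P(\tau>t)^{1/q},
\qquad \tfrac1p+\tfrac1q=1,
$$
with $1/q=\tfrac{2}{3}(1+\delta)$ and $\delta\in(0,1/2)$ makes the resulting power of $t$ equal to $t^{\,1+\delta-3/(2q)}=t^0$, giving a uniform bound on a $(1+\delta)$-moment and hence the required uniform integrability. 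Passing to the limit then upgrades $\E[\tau]\le(y-x)(z-y)$ to equality.
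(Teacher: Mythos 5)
Your proof is correct, but it takes a genuinely different route from the paper's. The paper also reduces to the correlated planar diffusion $(B^1,B^2)=(U,V)$, but instead of working with the local martingale $U_tV_t+t$ it applies It\^{o}'s formula to the cubic $X_t=B^1_tB^2_t(B^1_t+B^2_t-l)$ and stops at the exit time $T_l$ of the triangle $\{u>0,\ v>0,\ u+v<l\}$: this function vanishes on all three sides of the triangle and has constant drift $l$, so $\E[X_{t\wedge T_l}]=\E[X_0]+l\,\E[t\wedge T_l]$, and since the stopped process is bounded by $l^3$, letting $t\to\infty$ requires only bounded convergence, giving $\E[T_l]=(y-x)(z-y)\cdot(l-(z-x))/l$; monotone convergence as $l\to\infty$ (using $T_l\uparrow\tau$) finishes. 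The uniform-integrability issue that you correctly identify as the technical heart of your version --- and resolve via the Garbit--Raschel cone estimate (Lemma \ref{Lemma: tail distribution for exit time from a cone}, after diagonalizing the covariance to get the $\pi/3$ cone) together with H\"older --- is thereby avoided entirely: the paper's proof of this lemma is elementary and never invokes the cone asymptotics. Your route is not foreign to the paper, however; it is essentially the technique the paper itself uses later in the proof of Proposition \ref{Proposition: expected value, typical behavior}, where It\^{o} is applied to $B^1_tB^3_t$ and the limit $\E[X_T]=\lim_t\E[X_{t\wedge T}]$ is controlled by Fatou plus exactly this kind of tail bound. The trade-off: the paper's truncation keeps the lemma self-contained and pushes the heavy cone estimate to where it is unavoidable, while your argument avoids the auxiliary parameter $l$ at the cost of importing that estimate here. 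Your bookkeeping checks out --- the covariance matrix, the opening angle $\pi/3$, the choice $1/q=\tfrac{2}{3}(1+\delta)$ with $\delta\in(0,1/2)$ giving a bounded $(1+\delta)$-moment, and the conclusion via a.s.\ convergence of $U_{t\wedge\tau}V_{t\wedge\tau}$ to $0$ --- so both proofs are valid.
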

	\begin{proof}
		Define 
		$$
		B_t := (B^1_t, B^2_t) = (W^y_t-W^x_t, W^z_t-W^y_t) \text{ for } t\ge 0,
		$$
		and 
		$$
		T_l := \tau_{\{x,y\}}\wedge \tau_{\{y,z\}}\wedge \inf\left\{t\ge 0:B^1_t+B^2_t=l\right \} \text{ for } l>z-x.
		$$
		Applying It\^{o}'s formula to $X_t = B^1_t B^2_t (B^1_t+B^2_t-l)$, and using the fact that $\langle B^1\rangle_t=\langle B^2\rangle_t=2t$, and $\langle B^{1}, B^{2}\rangle_t=-t$, we have
		\begin{equation*}
			\begin{split}
				X_t 
				&= X_0+ \int_0^t \left(2B^1_s B^{2}_s +(B^{2}_s)^2 - l B^{2}_s\right)\ dB^1_{s} +\int_0^t \left(2B^1_s B^{2}_s +(B^{1}_s)^2 - l B^{1}_s\right)\ dB^{2}_{s}\\
				&\qquad+\frac{1}{2}\int_0^t 2B^{2}_s \
				d\langle B^{1}\rangle_s+\frac{1}
				{2}\int_0^t 2B^{1}_s\ d\langle
				B^{2}\rangle_s + \int_0^t \left(2B^1_s +2B^2
				_s -l\right) \ d\langle B^1, B^2\rangle_s\\
				&=X_0 + \int_0^t \left(2B^1_s B^2_s +(B^2
				_s)^2 - l B^2_s\right)\ dB^1_{s} +\int_0^t
				\left(2B^1_s B^2_s +(B^1_s)^2 - l B^1_s\right)\
				dB^2_s\\
				&\qquad+\int_0^t 2B^2_s\ ds+\int_0^t
				2B^1_s\ ds- \int_0^t \left(2B^1_s +2B^2_s -l\right)
				\ ds\\
				&=X_0 + \int_0^t \left(2B^1_s B^2_s +(B^2
				_s)^2 - l B^2_s\right)\ dB^1_s +\int_0^t
				\left(2B^1_s B^2_s +(B^1_s)^2 - l B^1_s\right)\
				dB^2_s+lt.\\	
			\end{split}
		\end{equation*} 
		Since 
		$$
		\E\left[\int_0^t \left(2B^1_sB^{2}_s +(B^{2}_s)^2 - l B^{2}_s\right)^2 \ ds +\int_0^t \left(2B^1_sB^{2}_s +(B^{1}_s)^2 - l B^{1}_s\right)^2\ ds\right]<\infty \qquad \text{ for all }t\ge 0,
		$$
		the process
		$$
		\int_0^t \left(2B^1_sB^{2}_s +(B^{2}_s)^2 - l B^{2}_s\right)\ dB^1_{s} +\int_0^t \left(2B^1_sB^{2}_s +(B^{1}_s)^2 - l B^{1}_s\right)\ dB^{2}_{s},\qquad t\ge 0,
		$$
		is therefore a martingale. By stopping $(X_t)_{t\ge 0}$ at $t\wedge T_l$ and
		taking expectations, we have
		$$
		\E\left[X_{t\wedge T_l}\right]= \E[X_0]+l\E\left[t\wedge T_l\right].
		$$
		Since $\lim_{t\rightarrow\infty} X_{t\wedge T_l}
		=X_{T_l}=0$ a.s. and $|X_{t\wedge T_l}|\le l^3$,
		it follows from the bounded convergence theorem
		that
		$$
		\E[T_l]=-\frac{\E[X_0]}{l} = (y-x)(z-y)\cdot\frac{l-(z-x)}{l}.	
		$$
		Taking the limit as $l$ goes to infinity gives $\E[\tau_{\{x,y\}}\wedge \tau_{\{y,z\}}] = (z-y)(y-x)$, which concludes the proof.
	\end{proof}

	\section{A single branch}
	\subsection{The tail distribution}
	Recall from \eqref{Intro: linear, formula for length of the ith branch that supports m leaves} that in the linear case with $2\le i \le n-m$, we have
	$$
	L_{n,i,m}=\left(\tau_{n,{\{i-1,i\}}}\wedge\tau_{n,\{i+m-1,i+m\}}-\tau_{n,\{i,i+m-1\}}\right)^+\le \tau_{n,{\{i-1,i\}}}\wedge\tau_{n,\{i+m-1,i+m\}}\le \tau_{n,{\{i-1,i,i+m\}}}.
	$$
	We give a bound on the tail of the distribution of $\tau_{n,{\{i-1,i,i+m\}}}$.
	\begin{lemma}\label{Lemma: tail distribution for a single branch}
		In the linear case with $2\le i\le n-m$, there exists a constant $C=C_m$ such that
		\begin{equation}\label{A single branch: tail distribution}
			\P\left(\tau_{n,{\{i-1,i,i+m\}}}\ge t\right) \le C n^{-3}t^{-3/2}\qquad\text{ for all } t>0.
		\end{equation}
		In particular,
		$$
		\P\left(\tau_{n,{\{i-1,i\}}}\wedge\tau_{n,\{i+m-1,i+m\}}\ge t\right)\le C n^{-3}t^{-3/2}\qquad\text{ for all } t>0.
		$$
	\end{lemma}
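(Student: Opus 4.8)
The plan is to recognize $\tau_{n,\{i-1,i,i+m\}}$ as the first-exit time of a planar Brownian motion from a cone, and then to invoke Lemma~\ref{Lemma: tail distribution for exit time from a cone}. By Remark~\ref{Remark: equivalence in distribution}, $\tau_{n,\{i-1,i,i+m\}}$ has the same distribution as the first time that two of the three \emph{independent} Brownian motions $W^{(i-1)/n}$, $W^{i/n}$, $W^{(i+m)/n}$ meet. Setting $U_t=W^{i/n}_t-W^{(i-1)/n}_t$ and $V_t=W^{(i+m)/n}_t-W^{i/n}_t$, the initial point $(U_0,V_0)=(1/n,m/n)$ lies in the open positive quadrant. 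Since the three starting points are ordered, all three particles are distinct precisely while $U_t>0$ and $V_t>0$ (the outermost gap $U_t+V_t$ is then automatically positive), so this first collision time equals the exit time of $(U,V)$ from the open positive quadrant.

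Next I would standardize the covariance. As in the proof of Lemma~\ref{Lemma: expected value of the external branch length, linear}, $\langle U\rangle_t=\langle V\rangle_t=2t$ and $\langle U,V\rangle_t=-t$, so $(U,V)$ is a linear image $M(B^1,B^2)$ of a standard planar Brownian motion with $MM^{\top}=\Sigma:=\left(\begin{smallmatrix}2&-1\\-1&2\end{smallmatrix}\right)$. The exit time of $(U,V)$ from the positive quadrant coincides with the exit time of $(B^1,B^2)$ from the cone $M^{-1}(\text{quadrant})$. Its angular width $\theta$ is read off from the $\Sigma^{-1}$-inner product of the two bounding directions $(1,0)$ and $(0,1)$: with $\Sigma^{-1}=\tfrac13\left(\begin{smallmatrix}2&1\\1&2\end{smallmatrix}\right)$ one computes $\cos\theta=\tfrac12$, i.e. $\theta=\pi/3$. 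After a rotation (which preserves the law of standard Brownian motion) this is exactly the cone $\mathcal{C}_{\pi/3}$ of Lemma~\ref{Lemma: tail distribution for exit time from a cone}, with starting point in its interior, so the relevant tail exponent is $\pi/(2\theta)=3/2$.

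To turn this into the stated uniform bound I would use Brownian scaling to remove the dependence on $n$ and $i$. Scaling space by $n$ and time by $n^2$, together with translation invariance, shows that the first collision time is distributed as $n^{-2}\tilde\tau_m$, where $\tilde\tau_m$ is the analogous collision time for three Brownian motions with gaps $1$ and $m$; crucially, $\tilde\tau_m$ depends only on $m$. By the previous paragraph and Lemma~\ref{Lemma: tail distribution for exit time from a cone}, $\P(\tilde\tau_m>s)\sim C_m s^{-3/2}$ as $s\to\infty$. Since $s^{3/2}\P(\tilde\tau_m\ge s)\le s^{3/2}$ is bounded on every bounded interval and converges to the finite limit $C_m$ as $s\to\infty$, the quantity $K_m:=\sup_{s>0}s^{3/2}\P(\tilde\tau_m\ge s)$ is finite. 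Therefore
$$
\P\left(\tau_{n,\{i-1,i,i+m\}}\ge t\right)=\P\left(\tilde\tau_m\ge n^2 t\right)\le K_m\,(n^2t)^{-3/2}=K_m\,n^{-3}t^{-3/2},
$$
which is \eqref{A single branch: tail distribution}. The final ``in particular'' assertion then follows at once from monotonicity of the tail together with the inequality $\tau_{n,\{i-1,i\}}\wedge\tau_{n,\{i+m-1,i+m\}}\le\tau_{n,\{i-1,i,i+m\}}$ recorded just before the lemma.

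The main obstacle is the passage from the purely asymptotic statement of Lemma~\ref{Lemma: tail distribution for exit time from a cone} (valid as $t\to\infty$ and for fixed starting points) to a bound holding for \emph{all} $t>0$ and \emph{uniformly} in $n$ and $i$. The scaling reduction is what makes this work: it collapses the whole family indexed by $(n,i)$ to the single random variable $\tilde\tau_m$, after which uniformity in $t$ reduces to the boundedness of a function with finite limits at both ends of $(0,\infty)$.
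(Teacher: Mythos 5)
Your proposal is correct and follows essentially the same route as the paper: reduce $\tau_{n,\{i-1,i,i+m\}}$ via Remark~\ref{Remark: equivalence in distribution} to the exit time of the correlated difference process from the positive quadrant, transform it into a standard planar Brownian motion exiting a cone of angle $\pi/3$ (the paper does this with an explicit map $\varphi$, you via the $\Sigma^{-1}$ inner product, which is the same computation), and then apply Lemma~\ref{Lemma: tail distribution for exit time from a cone} together with Brownian scaling. If anything, you are more explicit than the paper about upgrading the asymptotic tail $\P(\tilde\tau_m>s)\sim C_m s^{-3/2}$ to a bound valid for all $t>0$ uniformly in $n$ and $i$, a point the paper's invocation of Remark~\ref{Remark: tail distribution for exit time from a cone} leaves implicit.
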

	
	\begin{proof}
		We consider the 2-dimensional (correlated) Brownian motion
		\begin{equation*}
			B_t=(B^1_t, B^2_t) = \left(W^{i/n}_t-W^{(i-1)/n}_t, W^{(i+m)/n}_t-W^{i/n}_t\right), \qquad\text{ for } t\ge 0.
		\end{equation*}
		By Remark \ref{Remark: equivalence in distribution}, writing $\stackrel{d}{=}$ for equivalence in distribution, we have, 
		\begin{equation}\label{A single branch: tail distribution, equivalence in distribution}
			\begin{split}
				\tau_{n,\{i-1,i,i+m\}}&=\inf\left\{t\ge 0: \check{Z}^{e_{n,i-1}}_t=\check{Z}^{e_{n,i}}_t \text{ or }\check{Z}^{e_{n,i}}_t=\check{Z}^{e_{n,i+m}}_t\right\}\\
				&\overset{d}=\inf\left\{t\ge 0: {Z}^{e_{n,i-1}}_t={Z}^{e_{n,i}}_t \text{ or }{Z}^{e_{n,i}}_t={Z}^{e_{n,i+m}}_t\right\}\\
				&=\inf\left\{t\ge 0: W^{(i-1)/n}_t=W^{i/n}_t \text{ or }W^{i/n}_t=W^{(i+m)/n}_t \right \}\\
				&= \inf\left\{t\ge 0: B^1_t=0 \text{ or } B^2_t=0 \right \}.\\
			\end{split}
		\end{equation}
		Consider the linear transformation of $\R^2$ defined by
		$$
		\varphi(x,y) := \left( \sqrt{\frac{2}{3}} \left(x+\frac{1}{2}y\right), \sqrt{\frac{1}{2}} y\right).
		$$
		Note that the process
		$$
		\varphi(B^1_t, B^2_t)=\left(\sqrt{\frac{2}{3}}\left(\frac{1}{2}W^{(i+m)/n}_t+\frac{1}{2}W^{i/n}_t-W^{(i-1)/n}_t\right), \sqrt{\frac{1}{2}}\left(W^{(i+m)/n}_t-W^{i/n}_t\right)\right), \qquad\text{ for } t\ge 0,
		$$
		is a two dimensional Brownian motion with independent components and unit variance in each component, starting from
		$\varphi(B^1_0, B^2_0) = \varphi(1/n, m/n)$. Also, the image of the first quadrant under $\varphi$ is $\{(x,y)\in \R^2: x>0, 0<y<\sqrt{3}x\}$, a cone with angle $\pi/3$ up to a rotation. By Remark \ref{Remark: tail distribution for exit time from a cone}, there exists a constant $C=C_m$ such that
		$$
		\P\left(\tau_{n,\{i-1,i,i+m\}}\ge t\right)\le C n^{-3}t^{-3/2},\qquad\text{ for all } t>0,
		$$
		which proves \eqref{A single branch: tail distribution}.
	\end{proof}
	
	\subsection{The expected value}
	
	We now give the proof of Proposition \ref{Proposition: expected value, typical behavior}.
	\begin{proof}[Proof of Proposition \ref{Proposition: expected value, typical behavior}]
		Recall from \eqref{Intro: linear, formula for length of the ith branch that supports m leaves} that for $2\le i \le n-m$, we have
		$$
		L_{n,i,m}=\left(\tau_{n,{\{i-1,i\}}}\wedge\tau_{n,\{i+m-1,i+m\}}-\tau_{n,\{i,i+m-1\}}\right)^+.
		$$
		For $m=1$, since $\tau_{n,\{i\}}=0$ by convention, we have 
		$$
		L_{n,i,1}=\left(\tau_{n,{\{i-1,i\}}}\wedge\tau_{n,\{i,i+1\}}-\tau_{n,\{i\}}\right)^+=\tau_{n,{\{i-1,i\}}}\wedge\tau_{n,\{i,i+1\}}.
		$$
		Then we have $\E[L_{n,i,1}]=1/{n^2}$ by Lemma \ref{Lemma: expected value of the external branch length, linear} with $x=(i-1)/n$, $y=i/n$ and $z=(i+1)/n$.
		
		For $m>1$, we write $A_m=\{i-1,i,i+m-1,i+m\}$. If the first coalescent event in $A_m$ is the coalescence of $i-1$ and $i$ or the coalescence of $i+m-1$ and $i+m$, then $L_{n,i,m}=0$. Otherwise, the first coalescent event is the coalescence of $i$ and $i+m-1$. Starting from $\tau_{n,A_m}=\tau_{n,\{i,i+m-1\}}$, the positions of the particles $\check{Z}^{e_{n,i-1}}$, $\check{Z}^{e_{n,i}}=\check{Z}^{e_{n,i+m-1}}$, and $\check{Z}^{e_{n,i+m}}$ evolve as independent Brownian motions before the next coalescent event among them, which is the same dynamics as for the case $m=1$. By Lemma \ref{Lemma: expected value of the external branch length, linear} with $x=\check{Z}^{e_{n,i-1}}_{\tau_{n,A_m}}$, $y=\check{Z}^{e_{n,i}}_{\tau_{n,A_m}}$ and $z=\check{Z}^{e_{n,i+m-1}}_{\tau_{n,A_m}}$, and the observation that one of the two factors is zero if the indicator fails to hold in the second equality, we have
		\begin{equation*}
			\begin{split}
				\E\left[L_{n,i,m}|\sigma\left(\boldsymbol{Z}^{\boldsymbol{e_n}}_t:0\le t\le \tau_{n,A_m}\right)\right]&= (\check{Z}^{e_{n,i}}_{\tau_{n,A_m}}-\check{Z}^{e_{n,i-1}}_{\tau_{n,A_m}})(\check{Z}^{e_{n,i+m}}_{\tau_{n,A_m}}-\check{Z}^{e_{n,i+m-1}}_{\tau_{n,A_m}})\mathbbm{1}_{\tau_{n,A_m}=\tau_{n,\{i,i+m-1\}}}\\
				\qquad&=(\check{Z}^{e_{n,i}}_{\tau_{n,A_m}}-\check{Z}^{e_{n,i-1}}_{\tau_{n,A_m}})(\check{Z}^{e_{n,i+m}}_{\tau_{n,A_m}}-\check{Z}^{e_{n,i+m-1}}_{\tau_{n,A_m}}).
			\end{split}
		\end{equation*}
		Taking the expectation, we have
		$$
		\E\left[L_{n,i,m}\right] = \E\left[(\check{Z}^{e_{n,i}}_{\tau_{n,A_m}}-\check{Z}^{e_{n,i-1}}_{\tau_{n,A_m}})(\check{Z}^{e_{n,i+m}}_{\tau_{n,A_m}}-\check{Z}^{e_{n,i+m-1}}_{\tau_{n,A_m}})\right].
		$$
		In view of Remark \ref{Remark: equivalence in distribution} with $A=A_m$,  we consider the 3-dimensional Brownian motion 
		$$
		B_t = (B^1_t, B^2_t, B^3_t)=\left(W^{i/n}_t-W^{(i-1)/n}_t, W^{(i+m-1)/n}_t-W^{i/n}_t, W^{(i+m)/n}_t-W^{(i+m-1)/n}_t\right), \qquad\text{ for } t\ge 0,
		$$
		and define
		\begin{equation*}
			T := \inf\left\{t\ge 0: B^1_t=0 \text{ or } B^2_t=0 \text{ or } B^3_t=0\right \}.
		\end{equation*}
		Then we have
		$$
		\E[L_{n,i,m}]=\E\left[B^1_{T}B^3_T\right].
		$$
		Applying It\^{o}'s formula to $X_t = B^1_t B^3_t$, and using the fact that $B^1_t$ and $B^3_t$ are independent, we have
		\begin{equation*}
			\begin{split}
				X_t = X_0+\int_0^t B^3_s\ d B^1_s+\int_0^t B^1_s \ dB^3_s.
			\end{split}
		\end{equation*}
		Since 
		$$
		\E\left[\int_0^t \left(B^3_s\right)^2\ ds+\int_0^t \left(B^1_s\right)^2 \ ds\right]<\infty \qquad\text{ for all }t<\infty,
		$$
		the process $(X_t)_{t\ge 0}$ is therefore a martingale. Stopping $(X_t)_{t\ge 0}$ at $t\wedge T$ and taking expectations, we have
		$$
		\E[X_{t\wedge T}]=\E[X_0]=\frac{1}{n^2}.
		$$
		It remains to show that $\E[X_T]=\lim_{t\rightarrow\infty} \E[X_{t\wedge T}]$. We have
		\begin{equation}\label{A single branch: expected value, taking the limit}
			|\E[X_T]-E[X_{t\wedge T}]|\le E[|X_{T}|\mathbbm{1}_{T>t}]+E[|X_{t}|\mathbbm{1}_{T>t}].   
		\end{equation}
		Note that $B_t^1=W^{i/n}_t-W^{(i-1)/n}_t$ and $B^3_t=W^{(i+m)/n}_t-W^{(i+m-1)/n}_t$ are nonnegative for $t\le T$, so the process $(X_{T\wedge t})_{t\ge 0}$ is nonnegative. Applying Fatou's lemma, we have
		$$\E[X_T]\le \liminf_{t\rightarrow\infty} \E[X_{T\wedge t }]=\frac{1}{n^2}<\infty.$$
		Then, by the dominated convergence theorem, $\E[X_T\mathbbm{1}_{T>t}]$ goes to $0$ as $t$ goes to infinity. For $E[|X_{t}|\mathbbm{1}_{T>t}]$, since 
		$$
		T\le \inf\left\{t\ge 0: B^1_t=0 \text{ or } B^2_t+B^3_t=0 \right \}\overset{d}{=}\tau_{n,\{i-1,i,i+m\}},
		$$
		it follows from Lemma \ref{Lemma: tail distribution for a single branch} that there exists a constant $C=C_m$ such that
		$$
		\P(T>t)\le C n^{-3}t^{-3/2}\le C t^{-3/2}.
		$$
		With the constant $C$ fixed, we show that for any event $A$,
		\begin{equation}\label{A single branch: expected value, taking the limit, second term}
			\P(A)\le C t^{-3/2}\implies\E(|X_t| \mathbbm{1}_{A})\le  (3C/2)t^{-1/2}\log t +(C-C\log C)t^{-1/2}, 
		\end{equation}
		which proves that the second term of \eqref{A single branch: expected value, taking the limit} goes to 0 as $t$ goes to infinity. Since
		\begin{equation*}
			\P(|X_t|>x)=\P(|B^1_t B^3_t|>x)\le \P\left((B^1_t)^2+(B^3_t)^2>2x\right)=\exp\left(-\frac{x}{t}\right),
		\end{equation*}
		it follows that $|X_t|$ is stochastically dominated by a random variable $Y_t$ whose tail probability is $P(Y_t>x)=\exp(-x/t)$ for all $x\ge 0$. For the random variable $Y_t$, we choose $x_0$ such that $P(Y_t>x_0)=Ct^{-3/2}$, i.e. $x_0=(3t\log t)/2-t\log C$. We have 
		\begin{equation*}
			\begin{split}
				\E(Y_t \mathbbm{1}_{Y_t>x_0})&=\int_0^\infty \P(Y_t\mathbbm{1}_{\{Y_t>x_0\}}>x)\ dx\\
				&=\int_0^{x_0} \P(Y_t>x_0) \ dx+ \int_{x_0}^{\infty} \P(Y_t>x) \ dx \\
				&= x_0\P(Y_t>x_0)  +t\exp\left(-\frac{x_0}{t}\right)\\
				&= (3C/2)t^{-1/2}\log t +(C-C\log C)t^{-1/2},\\
			\end{split}
		\end{equation*}
		which proves \eqref{A single branch: expected value, taking the limit, second term} because $Y_t$ stochastically dominates $|X_t|$.

	\end{proof}

	\section{Law of Large Numbers}
	\subsection{External branch lengths in sub-systems}
	Now we consider the branch lengths in sub-systems. The reason for doing this is to exploit the independence of branch lengths in the sub-systems. Also, the lengths in the sub-systems agree with those in the whole system with sufficiently high probability. More precisely, let $d_{\mathbb{S}^1}(\cdot,\cdot)$ be the metric of $\mathbb{S}^1$ given by the arc length. For the coalescing Brownian motion with $n$ particles and $i,j\in\{1,2,\dots, n\}$, we define 
	$$
	d_n(i,j):=\left\{
	\begin{array}{cc}
		|e_{n,i}-e_{n,j}|=|i-j|/n, & \text{ if } E=\R,\\
		\frac{1}{2\pi}d_{\mathbb{S}^1}(e_{n,i}, e_{n,j}),& \text{ if } E=\mathbb{S}^1.
	\end{array}
	\right.
	$$
	Fix some $\epsilon>0$ sufficiently small, for example, $\epsilon=0.01$. We define the neighborhood of $i$ as
	$$
	N_n(i):=\left\{j\in\{1,\dots, n\}: d_n(i,j)\le n^{-2/3+\epsilon}\right\}.
	$$
	The coalescing Brownian motion in $N_n(i)$ is obtained by considering only $\boldsymbol{Z^{e_{N_n(i)}}} = (Z^{e_{n,k}})_{k\in N_{n}(i)}$. Quantities in this system are subscripted with $N_{n}(i)$ instead of $n$. For example, the length of the portion of the $i$th branch that supports $m$ leaves in this system is denoted by $L_{N_{n}(i),i,m}$. We can recover the coalescing Brownian motion with $n$ particles from the coalescing Brownian motion in $N_n(i)$ by taking the Brownian motions $(Z^{e_{n,j}})_{j\notin N_n(i)}$ into account. Note that the distribution of the positions of particles in $N_n(i)$ remains invariant, i.e.
	$$
	\left(\check{Z}^{e_{N_n(i), k}}\right)_{k\in N_n(i)}\stackrel{d}{=}\left(\check{Z}^{e_{n, k}}\right)_{k\in N_n(i)}.
	$$
	In particular, we have $L_{N_{n}(i),i,m}\stackrel{d}{=} L_{n,i,m}$.
	
	\begin{lemma}\label{Lemma: coupling, linear subclone and linear population}
		Let $L_{N_{n}(i),i}$ be defined as above. In the linear case with $2\le i\le n-m$, we have
		\begin{enumerate}
			\item $\boldsymbol{{Z^{e_{N_n(i)}}}}$ and $\boldsymbol{{Z^{e_{N_n(j)}}}}$ are independent if $N_{n}(i)\cap N_{n}(j)=\emptyset$.
			\item For all $\epsilon>0$, there exists a constant $C=C_\epsilon$ such that $\P(L_{N_{n}(i),i,m}\neq L_{n,i,m})\le C n^{-1-\epsilon/2}$.
		\end{enumerate}
	\end{lemma}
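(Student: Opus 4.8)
For part (1), the point is purely measurability and independence. In the linear case each driving process is $Z^{e_{n,k}}=W^{k/n}$, and the entire coalescing system $\boldsymbol{Z^{e_{N_n(i)}}}$ — together with every coalescence time and partition it determines — is a deterministic functional of the family $\{W^{k/n}:k\in N_n(i)\}$. If $N_n(i)\cap N_n(j)=\emptyset$, then $\{W^{k/n}:k\in N_n(i)\}$ and $\{W^{k/n}:k\in N_n(j)\}$ are disjoint subfamilies of the independent collection $\{W^x\}_{x\in\R}$, hence independent, and applying the two functionals preserves independence.

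For part (2), the plan is to exhibit a good event, measurable with respect to the driving Brownian motions, on which $L_{N_n(i),i,m}=L_{n,i,m}$, and whose complement has probability $O(n^{-1-\epsilon/2})$. Write $\ell=\min N_n(i)$ and $r=\max N_n(i)$ for the two extreme indices in the neighborhood, so that $\ell$ and $r$ start at distance $\asymp n^{-2/3+\epsilon}$ from $i/n$. The crucial structural input is the order-preserving property of one-dimensional coalescing Brownian motion: since the initial positions increase in the index, the actual positions stay ordered, $\check Z^{e_{n,1}}_t\le\cdots\le\check Z^{e_{n,n}}_t$ for all $t$, with equality precisely when particles have coalesced. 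This localizes all interaction with the outside world to the two boundary particles: if a relevant particle (one of $i-1,i,\dots,i+m$) coalesces before time $\tau_{n,\{i-1,i,i+m\}}$ with a particle of index below $\ell$, then by ordering $\ell$ must already lie in that relevant particle's block, and symmetrically on the right. Hence on the event
\[
G:=\{\tau_{n,\{\ell,i-1\}}>\tau_{n,\{i-1,i,i+m\}}\}\cap\{\tau_{n,\{i+m,r\}}>\tau_{n,\{i-1,i,i+m\}}\},
\]
no relevant particle interacts with a particle outside $N_n(i)$ before $\tau_{n,\{i-1,i,i+m\}}$, so the relevant particles undergo the same internal coalescences in both systems up to that time; since all coalescence events determining \eqref{Intro: linear, formula for length of the ith branch that supports m leaves} occur by $\tau_{n,\{i-1,i,i+m\}}$ (as reflected in $L_{n,i,m}\le\tau_{n,\{i-1,i,i+m\}}$), this forces $L_{N_n(i),i,m}=L_{n,i,m}$ on $G$.

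It then remains to bound $\P(G^c)$, which I would do by splitting at a threshold $T_0=n^{-\alpha}$:
\[
\P(G^c)\le \P\bigl(\tau_{n,\{i-1,i,i+m\}}>T_0\bigr)+\P\bigl(\tau_{n,\{\ell,i-1\}}\le T_0\bigr)+\P\bigl(\tau_{n,\{i+m,r\}}\le T_0\bigr).
\]
The first term is $\le Cn^{-3}T_0^{-3/2}$ by Lemma \ref{Lemma: tail distribution for a single branch}. For the other two, Remark \ref{Remark: equivalence in distribution} lets me replace the coalescing system by independent Brownian motions, so $\tau_{n,\{\ell,i-1\}}$ has the law of the first time a variance-$2t$ Brownian motion started at $d\asymp n^{-2/3+\epsilon}$ reaches $0$; the reflection principle (Lemma \ref{Lemma: reflection principle}) bounds this by $C\,d^{-1}\sqrt{T_0}\,\exp(-d^2/(4T_0))$. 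Choosing $\alpha=4/3-\epsilon/3$ makes the first term exactly of order $n^{-1-\epsilon/2}$, while the exponent $d^2/T_0\asymp n^{5\epsilon/3}\to\infty$ renders the other two terms super-polynomially small, so that summing gives $\P(G^c)\le Cn^{-1-\epsilon/2}$.

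The main obstacle is the structural reduction in the middle paragraph: a naive bound would try to rule out interaction with each of the $\Theta(n)$ particles outside $N_n(i)$ and thereby lose a factor of $n$. The order-preserving property is exactly what collapses this to the two boundary coalescences $\tau_{n,\{\ell,i-1\}}$ and $\tau_{n,\{i+m,r\}}$. Verifying carefully that $G$ truly forces the two branch lengths to coincide needs the most attention — in particular the degenerate case $L_{n,i,m}=0$, where one must check that the sub-system also records a vanishing length because both systems agree on whether the block is born before it dies, and both of these events occur by $\tau_{n,\{i-1,i,i+m\}}$. Balancing $T_0$ so that the heavy-tailed term from Lemma \ref{Lemma: tail distribution for a single branch} and the Gaussian term from the reflection principle are simultaneously below $n^{-1-\epsilon/2}$ is then only a short computation.
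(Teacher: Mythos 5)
Part (1) is fine, and the estimates in your final step are sound: the distributional identity from Remark \ref{Remark: equivalence in distribution}, the heavy-tail bound from Lemma \ref{Lemma: tail distribution for a single branch}, and the reflection-principle bound with your threshold $T_0=n^{-4/3+\epsilon/3}$ are exactly the tools the paper uses, and the arithmetic works. The gap is the structural claim that on $G$ one has $L_{N_n(i),i,m}=L_{n,i,m}$; this is false, because $G$ is built entirely from \emph{whole-system} coalescence times, while the two systems can disagree through a coalescence that occurs \emph{only in the sub-system}. Outside particles do not merely add coalescences to the whole system; by absorbing a non-relevant particle of $N_n(i)$ and dragging it away, they can \emph{prevent}, in the whole system, a coalescence between two inside particles that does take place in the sub-system. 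Concretely, take $m=1$ and consider the following positive-probability scenario: in the whole system a block of outside particles sweeps up all of $\ell,\ell+1,\dots,i-2$ by some time $t_2$, after which its representative (an outside driver) stays far from everything; in the sub-system these particles are not absorbed, and at some time $s>t_2$ the sub-system block containing $i-2$, whose position is $W^{\rho/n}$ for some representative $\rho\in N_n(i)$, meets $W^{(i-1)/n}$ and carries particle $i-1$ away from $W^{i/n}$; then $W^{(i-1)/n}$ meets $W^{i/n}$ at a time $u>s$, and $W^{i/n}$ meets $W^{(i+1)/n}$ at a time $v>u$, everything else staying away until after $v$. In the whole system particle $i-1$ is never captured (the particle that would capture it is sitting on the far-away outside driver at time $s$), so $L_{n,i,1}=\tau_{n,\{i-1,i\}}\wedge\tau_{n,\{i,i+1\}}=u$; in the sub-system particle $i-1$ is captured at time $s$, so $L_{N_n(i),i,1}=v\neq u$. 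Meanwhile $G$ holds, since the whole-system block containing $\ell$ is the outside block, which never approaches the block of $i-1$ before $\tau_{n,\{i-1,i,i+1\}}=u$, and the right side is quiet. Hence $\P\left(G\cap\{L_{n,i,m}\neq L_{N_n(i),i,m}\}\right)>0$, and no bound on $\P(G^c)$ can close your argument.

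The ordering property does localize \emph{direct} whole-system contact between outside particles and the relevant ones to the boundary indices, but a discrepancy propagates inward through a chain of coalescences each of which may occur in either system, and the good event must block that propagation. This is why the paper's proof cannot work with whole-system quantities alone: it defines $\underline{i}$ through \emph{sub-system} coalescence times (the leftmost particle of $N_n(i)$ joining $i$'s cluster before the sub-system horizon $\tau_{N_n(i),\{i-1,i\}}\wedge\tau_{N_n(i),\{i+m-1,i+m\}}$) and then bounds the probability that the \emph{whole-system} trajectory $\check{Z}^{e_{n,\underline{i}}}$ meets the driver of an outside particle before that horizon, splitting further on $\underline{i}\le i_0$ for the midpoint index $i_0$. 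The essential feature is the mixing of sub-system quantities with whole-system trajectories — precisely what your $G$ lacks (note that in the scenario above, the kidnapped particles' whole-system paths literally coincide with an outside driver, which is the kind of event the paper's formulation is built to detect). Identifying a correct, provably necessary event of this cross-system type is the delicate heart of the lemma; once that is done, your threshold computation goes through essentially verbatim, so the defect in your proposal is the choice of event, not the estimates.
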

	
	\begin{proof}
		The first claim is straightforward. For the second claim, we define
		$$
		\underline{i}:=\inf\{j\in N_{n}(i): \tau_{N_n (i), \{i,j\}}\le \tau_{N_n(i), \{i-1,i \}}\wedge\tau_{N_n(i), \{i+m-1,i+m \}}\},\qquad
		$$
		the leftmost particle in $N_n(i)$ that coalesces with the $i$th particle before time $\tau_{N_n(i), \{i-1,i \}}\wedge\tau_{N_n(i), \{i+m-1,i+m \}}$.
		Then $L_{n,i,m}\neq L_{N_{n}(i),i,m}$ only if $\check{Z}^{e_{\underline{i}}}$ 
		coalesces with $Z^{e_j}$, for some $j\notin N_{n}(i)$ before $\tau_{N_n(i), \{i-1,i \}}\wedge\tau_{N_n(i), \{i+m-1,i+m \}}$, and we have
		
		\begin{align}\label{LLN: LLN: coupling, linear subclone and linear population, left part}
			&\P(L_{n,i,m}\neq L_{N_n (i), i, m})\nonumber\\
			&\qquad\le \P\left(\exists j,t:\ j\notin N_{n}(i),j<i,\  t<\tau_{N_n(i), \{i-1,i \}}\wedge\tau_{N_n(i), \{i+m-1,i+m \}},\  \check{Z}^{e_{n,\underline{i}}}_t = Z^{e_{n,j}}_t\right).    
		\end{align}
		We now bound the right hand side of \eqref{LLN: LLN: coupling, linear subclone and linear population, left part}. Let $i_0=\lfloor i- n^{1/3+\epsilon}/2\rfloor$, and let $j\notin{N_n(i)}, j<i$. If $\underline{i}>i_0$, then $Z^{e_{n,j}}_t$ coalesces with $\check{Z}^{e_{n,i_0}}$ before $Z^{e_{n,j}}_t$ coalesces with $\check{Z}^{e_{n,\underline{i}}}$. It follows that
		\begin{align}\label{LLN: coupling, linear subclone and linear population, split}
			&\P\left(\exists j,t:\ j\notin N_{n}(i),j<i,\  t<\tau_{N_n(i),\{i-1,i\}}\wedge\tau_{N_n(i),\{i+m-1,i+m\}},\  \check{Z}^{e_{n,\underline{i}}}_t = Z^{e_{n,j}}_t\right)\nonumber\\
			&\qquad\le \P(\tau_{N_n(i),\{i-1,i\}}\wedge\tau_{N_n(i),\{i+m-1,i+m\}}\ge n^{-4/3+\epsilon})\nonumber\\
			&\qquad\qquad+\P(\exists j,t:\ j\notin N_{n}(i),j<i,t\le n^{-4/3+\epsilon}, \check{Z}^{e_{n,\underline{i}}}_t = Z^{e_{n,j}}_t)\nonumber\\
			&\qquad\le \P(\tau_{N_n(i),\{i-1,i\}}\wedge\tau_{N_n(i),\{i+m-1,i+m\}}\ge n^{-4/3+\epsilon})\nonumber\\
			&\qquad\qquad+\P(\exists j,t:\ j\notin N_{n}(i),j<i,t\le n^{-4/3+\epsilon},\check{Z}^{e_{n,i_0}}_t = Z^{e_{n,j}}_t)+\P\left(\underline{i}\le i_0\right)\nonumber\\
			&\qquad \le \P(\tau_{N_n(i),\{i-1,i\}}\wedge\tau_{N_n(i),\{i+m-1,i+m\}}\ge n^{-4/3+\epsilon})\nonumber\\
			&\qquad\qquad+\sum_{j\notin N_n(i),j<i}\P(\exists t\le n^{-4/3+\epsilon}: \check{Z}^{e_{n,i_0}}_t = Z^{e_{n,j}}_t)+\P\left(\underline{i}\le i_0\right).
		\end{align}	
		By Lemma \ref{Lemma: tail distribution for a single branch}, we have
		\begin{align}\label{LLN: coupling, linear subclone and linear population, long coalescent time}
			\P(\tau_{N_n(i),\{i-1,i\}}\wedge\tau_{N_n(i),\{i+m-1,i+m\}}\ge n^{-4/3+\epsilon})&=\P(\tau_{n,\{i-1,i\}}\wedge\tau_{n,\{i+m-1,i+m\}}\ge n^{-4/3+\epsilon})\nonumber\\
			&\le C n^{-1-3\epsilon/2}.   
		\end{align}
		For $j\notin N_{n}(i)$, by Lemma \ref{Lemma: reflection principle}, we have
		\begin{align}\label{LLN: coupling, linear subclone and linear population, short coalescent time}
			\P\left(\exists t\le n^{-4/3+\epsilon}: \check{Z}^{e_{n,i_0}}_t = Z^{e_{n,j}}_t\right)
			&=\P\left(\exists t\le n^{-4/3+\epsilon} :\sqrt{2}W^0_t = d_n(i_0,j)\right)\nonumber\\
			&\le \P\left(\exists t\le n^{-4/3+\epsilon} :W^0_t = {n^{-2/3+\epsilon}}/\sqrt{2}\right)\nonumber\\
			&=\P\left(|W^0_{n^{-4/3+\epsilon}}| \ge {n^{-2/3+\epsilon}}/\sqrt{2}\right)\nonumber\\
			&\le Cn^{-\epsilon/2}\exp(-n^{\epsilon}/4)\nonumber\\
			&\le Cn^{-2-3\epsilon/2}.
		\end{align}
		Also, by \eqref{LLN: coupling, linear subclone and linear population, long coalescent time} and the proof of \eqref{LLN: coupling, linear subclone and linear population, short coalescent time}, we have
		\begin{align}\label{LLN: coupling, linear subclone and linear population, long range}
			\P(\underline{i}\le i_0)&=\P(\tau_{N_n(i),\{i_0, i\}}\le \tau_{N_n(i),\{i-1,i\}}\wedge\tau_{N_n (i),\{i+m-1,i+m\}})\nonumber\\
			&\le \P(\tau_{N_n(i),\{i-1,i\}}\wedge\tau_{N_n(i),\{i+m-1,i+m\}}\ge n^{-4/3+\epsilon})+\P(\tau_{N_n(i),\{i_0, i\}}\le n^{-4/3+\epsilon})\nonumber\\
			&\le C n^{-1-3\epsilon/2}+C n^{-2-3\epsilon/2}\nonumber\\
			&\le C n^{-1-3\epsilon/2}.
		\end{align}
		Combining \eqref{LLN: coupling, linear subclone and linear population, split}, \eqref{LLN: coupling, linear subclone and linear population, long coalescent time}, \eqref{LLN: coupling, linear subclone and linear population, short coalescent time} and \eqref{LLN: coupling, linear subclone and linear population, long range} gives us
		$$
		\P\left(\exists j,t:\ j\notin N_n(i),j<i,\  t<L_{N_n(i), \{i-1,i\}},\  \check{Z}^{e_{n,\underline{i}}}_t = Z^{e_{n,j}}_t\right)\le Cn^{-1-3\epsilon/2},
		$$
		which completes the proof.
	\end{proof}
	
	\subsection{Proof of the main results in the linear case}
	We now proceed to the proof of Theorem \ref{Theorem: main result} in the linear case.
	\begin{proof} 
		Recall from \eqref{Intro: linear, formula for length of the ith branch that supports m leaves} that 
		$$
		L_{n,1,m}=\left(\tau_{n,\{m,m+1\}}-\tau_{n,\{1,m\}}\right)^{+}\le \tau_{n,\{m,m+1\}}.
		$$
		Since $\tau_{n,\{m,m+1\}}$ is the hitting time of two Brownian particles which begin a distance $1/n$ apart, it follows that $n\tau_{n,\{m,m+1\}}$ converges to 0 in probability. Hence, $nL_{n,1,m}$ converges to 0 in probability. Similarly, $nL_{n,n-m+1,m}$ converges to 0 in probability. Therefore, it suffices to show that
		\begin{equation}\label{LLN: edges effects ignored}
			n\sum_{i=2}^{n-m}L_{n,i,m}\stackrel{\P}{\longrightarrow}1, 
		\end{equation}
		where $\stackrel{\P}{\longrightarrow}$ denotes convergence in probability.
		By Lemma \ref{Lemma: coupling, linear subclone and linear population}, using the union bound, we have
		$$
		\P\left(\sum_{i=2}^{n-m} L_{n,i,m} \neq \sum_{i=2}^{n-m} L_{N_n(i),i,m}\right)\le Cn^{-1-\epsilon/2}\cdot n =C n^{-\epsilon/2}.
		$$
		Define a truncated versions of $L_{n,i,m}$ as 
		$$ 
		\widetilde{L}_{n,i,m} := \left(\tau_{n,\{i-1,i\}}\wedge\tau_{n,\{i+m-1,i+m\}}\wedge n^{-4/3+\epsilon}-\tau_{n(i),\{i,i-m+1\}}\right)^+,
		$$
		and of $L_{N_n(i),i,m}$ as
		$$ 
		\widetilde{L}_{N_n(i),i,m} := \left(\tau_{N_n(i),\{i-1,i\}}\wedge\tau_{N_n(i),\{i+m-1,i+m\}}\wedge n^{-4/3+\epsilon}-\tau_{N_{n}(i),\{i,i-m+1\}}\right)^+.
		$$
		By Lemma \ref{Lemma: tail distribution for a single branch}, we have
		\begin{equation*}
			\begin{split}
				\P\left(\sum_{i=2}^{n-m} L_{N_n(i),i,m}\neq \sum_{i=2}^{n-m} \widetilde{L}_{N_n(i),i,m}\right)
				&\le \sum_{i=2}^{n-m}\P(L_{N_n(i),i,m}\neq\widetilde{L}_{N_n(i),i,m})\\
				&\le \sum_{i=2}^{n-m}\P\left(\tau_{N_{n}(i),\{i-1,i\}}\wedge \tau_{N_{n}(i),\{i+m-1,i+m\}}>n^{-4/3+\epsilon}\right)\\
				&= \sum_{i=2}^{n-m}\P\left(\tau_{n,\{i-1,i\}}\wedge \tau_{n,\{i+m-1,i+m\}}>n^{-4/3+\epsilon}\right)\\
				&\le  Cn\cdot n^{-1-3\epsilon/2}\\
				&=C n^{-3\epsilon/2}.
			\end{split}
		\end{equation*}
		Therefore, it suffices to show that
		\begin{equation}\label{LLN: truncated locally dependent sequence}
			n \sum_{i=2}^{n-m} \widetilde{L}_{N_n(i),i,m}\stackrel{\P}{\longrightarrow}1.
		\end{equation}
		
		For the expected value of \eqref{LLN: truncated locally dependent sequence}, by Proposition \ref{Proposition: expected value, typical behavior}, we have
		$$
		\E[L_{N_n(i),i,m}]=\E[L_{n,i,m}]=\frac{1}{n^2},
		$$
		and by Lemma \ref{Lemma: tail distribution for a single branch}, we have
		\begin{equation*}
			\begin{split}
				\E\left[L_{N_n(i),i,m}-\widetilde{L}_{N_n(i),i,m}\right]
				&\le \E\left[\left(\tau_{N_n(i),\{i-1,i\}}\wedge\tau_{N_{n}(i),\{i+m-1,i+m\}}-n^{-4/3+\epsilon}\right)^{+}\right]\\
				&= \E\left[\left(\tau_{n,\{i-1,i\}}\wedge\tau_{n,\{i+m-1,i+m\}}-n^{-4/3+\epsilon}\right)^{+}\right]\\
				&= \int_{n^{-4/3+\epsilon}}^\infty \P\left(\tau_{n,\{i-1,i\}}\wedge\tau_{n,\{i+m-1,i+m\}}>t\right)\ dt\\
				&\le C \int_{n^{-4/3+\epsilon}}^\infty n^{-3}t^{-3/2}\ dt\\
				&= C n^{-7/3-\epsilon/2}.
			\end{split}
		\end{equation*}
		It follows that
		\begin{equation}\label{LLN: difference of the expected value of tau n,i-1,i,i+1 and its truncation}
			\lim_{n\rightarrow\infty}\E\left[n \sum_{i=2}^{n-m} \widetilde{L}_{N_n(i),i,m}\right]=1.
		\end{equation}

		For the variance of \eqref{LLN: truncated locally dependent sequence}, by the first claim of Lemma \ref{Lemma: coupling, linear subclone and linear population}, $\widetilde{L}_{N_n(i),i,m}$ and $\widetilde{L}_{N_n(i),j,m}$ are independent if $N_{n}(i)$ and $N_{n}(j)$ are disjoint. Therefore, we have
		\begin{equation}\label{LLN: variance bound}
			\begin{split}
				\Var\left(n\sum_{i=2}^{n-m} \widetilde{L}_{N_n(i),i,m}\right)
				&=n^2\sum_{i=2}^{n-m}\sum_{j: N_{n}(j)\cap N_{n}(i)\neq\emptyset} \Cov\left(\widetilde{L}_{N_n(i),i,m},\widetilde{L}_{N_n(j),j,m}\right)\\
				&\le n^2\sum_{i=2}^{n-m}\sum_{j: N_{n}(j)\cap N_{n}(i)\neq\emptyset} \E\left[\left(\widetilde{L}_{N_{n}(i),i,m}\right)^2\right]\\
				&= n^2\sum_{i=2}^{n-m}\sum_{j: N_{n}(j)\cap N_{n}(i)\neq\emptyset} \E\left[\left(\widetilde{L}_{n,i,m}\right)^2\right]\\
				&\le Cn^2\cdot n\cdot n^{1/3+\epsilon}\E\left[\left(\widetilde{L}_{n,i,m}\right)^2\right].\\
			\end{split}
		\end{equation}
		Using Lemma \ref{Lemma: tail distribution for a single branch}, we have,
		\begin{equation*}
			\begin{split}
				\E\left[\left(\widetilde{L}_{n,i,m}\right)^2\right]
				&=\int_0^\infty 2t\P\left(\widetilde{L}_{n,i,m}\ge t\right)\ dt\\
				&=\int_0^{n^{-4/3+\epsilon}} 2t\P\left(L_{n,i,m}\ge t\right)\ dt\\
				&\le\int_0^{n^{-4/3+\epsilon}} 2t\P\left(\tau_{n,\{i-1,i\}}\wedge\tau_{n,\{i+m-1, i+m\}}\ge t\right)\ dt\\
				&\le\int_0^{n^{-4/3+\epsilon}} 2 t\cdot  C n^{-3} t^{-3/2}\ dt\\
				&\le C n^{-11/3+\epsilon/2}.
			\end{split}
		\end{equation*}
		Combining this with \eqref{LLN: variance bound}, we have
		$$
		\Var\left(n\sum_{i=2}^{n-m} \widetilde{L}_{N_n(i),i,m}\right)\le C n^2\cdot n \cdot n^{1/3+\epsilon}\cdot n^{-11/3+\epsilon/2}= C n^{-1/3+3\epsilon/2}.
		$$
		It follows from Chebyshev's inequality that
		\begin{equation*}
			n\sum_{i=2}^{n-m}  \widetilde{L}_{N_n(i),i,m}-\E\left[n\sum_{i=2}^{n-m}  \widetilde{L}_{N_n(i),i,m}\right]\stackrel{\P}{\longrightarrow}0.
		\end{equation*}
		Combining this with \eqref{LLN: difference of the expected value of tau n,i-1,i,i+1 and its truncation} gives the result.
	\end{proof}

	\begin{remark}\label{Remark: edges effects ignored}
		The same argument used to prove \eqref{LLN: edges effects ignored} implies that
		$$
		n\sum_{i=\lfloor n^{1/3+\epsilon} \rfloor }^{\lceil n-n^{1/3+\epsilon}\rceil}L_{n,i,m}\stackrel{\P}{\longrightarrow}1, 
		$$
		which we will use in the proof in the circular case.
	\end{remark}
	
	\subsection{Proof of the main result in the circular case}
	Now, we deduce the result in the circular case from the linear case. In this section, quantities in the circular case are subscript with $\mathbb{S}^1$. For example, the length of the portion of the $i$th branch that supports $m$ leaves is denoted by $L_{\mathbb{S}^1, n,i,m}$. The following lemma bounds the probability that the branch lengths differ in the linear and circular cases.
	\begin{lemma}\label{Lemma: coupling, linear population and circular population}
		Consider $n^{1/3+\epsilon}< i< n-n^{1/3+\epsilon}$ so that the neighborhood of $N_n(i)$ of $i$ consists of the same particles in the linear and circular case, we have
		\begin{equation}\label{LLN: coupling, linear subclone and circular subclone}
			\P\left(L_{N_n(i),i,m}\neq{L}_{\mathbb{S}^1,N_{n}(i),i,m}\right)\le C n^{-1-3\epsilon/2} 
		\end{equation}
		and 
		\begin{equation}\label{LLN: coupling, ciruclar subclone and circular population}
			\P\left({L}_{\mathbb{S}^1,N_{n}(i),i,m}\neq {L}_{\mathbb{S}^1,n,i,m}\right)\le C n^{-1-3\epsilon/2}.
		\end{equation}
	\end{lemma}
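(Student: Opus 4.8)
The plan is to drive both the linear and the circular coalescing systems restricted to $N_n(i)$ with the \emph{same} family of Brownian motions $\{W^{k/n}\}_{k\in N_n(i)}$. In the linear system particles $j$ and $k$ coalesce when $W^{j/n}_t=W^{k/n}_t$, whereas in the circular system they coalesce when $W^{j/n}_t-W^{k/n}_t\in\mathbb{Z}$. Hence under this coupling the two restricted systems evolve \emph{identically} until the first time some difference $W^{j/n}_t-W^{k/n}_t$ reaches a nonzero integer. Since all indices in $N_n(i)$ lie within angular distance $2n^{-2/3+\epsilon}$ of one another, such a ``wrap-around'' forces a Brownian difference to travel a distance close to $1$, which is negligible on the relevant time scale $n^{-4/3+\epsilon}$.

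To prove \eqref{LLN: coupling, linear subclone and circular subclone}, I would introduce the first wrap-around time
$$
\sigma:=\inf\left\{t\ge 0:\exists\, j,k\in N_n(i),\ j\ne k,\ W^{j/n}_t-W^{k/n}_t\in\mathbb{Z}\setminus\{0\}\right\}.
$$
Before $\sigma$ the two restricted systems coincide, so $L_{N_n(i),i,m}$ and $L_{\mathbb{S}^1,N_n(i),i,m}$ agree on the event that $\sigma$ exceeds the (linear) time $\tau_{N_n(i),\{i-1,i\}}\wedge\tau_{N_n(i),\{i+m-1,i+m\}}$ by which the branch $L_{N_n(i),i,m}$ is already determined. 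Therefore
$$
\P\bigl(L_{N_n(i),i,m}\ne L_{\mathbb{S}^1,N_n(i),i,m}\bigr)\le\P\bigl(\tau_{N_n(i),\{i-1,i\}}\wedge\tau_{N_n(i),\{i+m-1,i+m\}}\ge n^{-4/3+\epsilon}\bigr)+\P\bigl(\sigma\le n^{-4/3+\epsilon}\bigr).
$$
The first term is at most $Cn^{-1-3\epsilon/2}$ exactly as in \eqref{LLN: coupling, linear subclone and linear population, long coalescent time}, using Lemma \ref{Lemma: tail distribution for a single branch}. For the second term, each difference $W^{j/n}-W^{k/n}$ is a Brownian motion of variance $2t$ starting within $2n^{-2/3+\epsilon}$ of the origin, so by the reflection principle (Lemma \ref{Lemma: reflection principle}) its chance of reaching $\pm1$ before time $n^{-4/3+\epsilon}$ is bounded by $\exp(-cn^{4/3-\epsilon})$; summing this stretched-exponential bound over the $O(n^{2/3+2\epsilon})$ pairs in $N_n(i)$ keeps it negligible against $n^{-1-3\epsilon/2}$.

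The estimate \eqref{LLN: coupling, ciruclar subclone and circular population} is the circular counterpart of the second claim of Lemma \ref{Lemma: coupling, linear subclone and linear population}, and I would prove it by the same scheme. The restricted circular system and the full circular system disagree only if a particle relevant to $L_{\mathbb{S}^1,n,i,m}$ coalesces with some $j\notin N_n(i)$ before $\tau_{N_n(i),\{i-1,i\}}\wedge\tau_{N_n(i),\{i+m-1,i+m\}}$. Because $n^{1/3+\epsilon}<i<n-n^{1/3+\epsilon}$, the neighborhood $N_n(i)$ sits in the bulk of the circle: outside particles adjacent to $N_n(i)$ on either side are controlled exactly as in \eqref{LLN: coupling, linear subclone and linear population, split}--\eqref{LLN: coupling, linear subclone and linear population, long range} (invoking the coupling above to replace circular coalescence by linear coalescence inside $N_n(i)$ up to the negligible wrap-around error), while ``far-side'' particles, at angular distance of order $1$ from $i$, must travel distance of order $1$ to coalesce and so contribute only a stretched-exponentially small probability by Lemma \ref{Lemma: reflection principle}. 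Collecting these bounds yields \eqref{LLN: coupling, ciruclar subclone and circular population}.

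The main obstacle is the bookkeeping in \eqref{LLN: coupling, ciruclar subclone and circular population}: unlike the one-sided linear estimate, here outside particles may approach $N_n(i)$ from both directions and in principle by wrapping around the circle. The feature that resolves this cleanly is that on the time scale $n^{-4/3+\epsilon}$ a Brownian particle typically moves only $O(n^{-2/3+\epsilon/2})$, so both wrap-around coalescences and far-side coalescences are suppressed at the rate $\exp(-cn^{4/3-\epsilon})$, and the dominant contribution reduces to the two-sided nearest-neighbor estimate already handled in the linear case.
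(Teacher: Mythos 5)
Your proposal is correct and takes essentially the same approach as the paper: for \eqref{LLN: coupling, linear subclone and circular subclone} the paper also drives both restricted systems with the same Brownian motions and bounds the wrap-around probability (tracking only the extremal pair $i',i''$ of $N_n(i)$ rather than all pairs) by splitting at time $n^{-4/3+\epsilon}$ using Lemma \ref{Lemma: tail distribution for a single branch} and the reflection principle. For \eqref{LLN: coupling, ciruclar subclone and circular population} the paper likewise runs a two-sided version of the argument of Lemma \ref{Lemma: coupling, linear subclone and linear population}, transferring the coalescence-time tail bound to the circular setting via the wrap-around coupling and controlling coalescence with outside particles through the smaller-arc (factor two) comparison, which matches your scheme.
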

	
	\begin{proof}
		For the proof of \eqref{LLN: coupling, linear subclone and circular subclone}, let $i'$ and $i''$ be the smallest and largest indices in $N_{n}(i)$ respectively. Note that $|i''-i'|\le C n^{1/3+\epsilon}$. Suppose $\check{Z}^{e_{n,i''}}_t-\check{Z}^{e_{n,i'}}_t<1$ for all $t\le \tau_{N_n(i), \{i-1,i\}}\wedge \tau_{N_n(i), \{i+m-1,i+m\}}$. Then the $i'$th particle and the $i''$th particle do not coalesce, and we have
		$$
		L_{N_n(i),i,m}={L}_{\mathbb{S}^1,N_{n}(i),i,m}
		$$ 
		and
		$$\tau_{N_{n}(i),\{i-1,i\}}\wedge\tau_{N_{n}(i),\{i+m-1,i+m\}}={\tau}_{\mathbb{S}^1,N_{n}(i),\{i-1,i\}}\wedge{\tau}_{\mathbb{S}^1,N_{n}(i),\{i+m-1,i+m\}}.
		$$
		Using Lemma \ref{Lemma: reflection principle} in the fourth line and Lemma \ref{Lemma: tail distribution for a single branch} in the last line, we have
		\begin{equation*}
			\begin{split}
				&\P\left(L_{N_n(i),i,m}\neq{L}_{\mathbb{S}^1,N_{n}(i),i,m}\right)\\
				&\qquad\le \P\left(\exists t\le \tau_{N_{n}(i),\{i-1,i\}}\wedge\tau_{N_{n}(i),\{i+m-1,i+m\}}:\check{Z}^{e_{n,i'}}_t=\check{Z}^{e_{n,i''}}_t-1\right)\\
				&\qquad\le\P\left( \tau_{N_{n}(i),\{i-1,i\}}\wedge\tau_{N_{n}(i),\{i+m-1,i+m\}}>n^{-4/3+\epsilon}\right) +\P\left(\exists t\le n^{-4/3+\epsilon}: \check{Z}^{e_{n,i'}}_t=\check{Z}^{e_{n,i''}}_t-1\right)\\
				&\qquad=\P\left( \tau_{n,\{i-1,i\}}\wedge\tau_{n,\{i+m-1,i+m\}}>n^{-4/3+\epsilon}\right) +\P\left(\sqrt{2}|W^0_{n^{-4/3+\epsilon}}| \ge 1-\frac{|i''-i'|}{n} \right)\\
				&\qquad\le C n^{-1-3\epsilon/2},\\
			\end{split}
		\end{equation*}
		which proves \eqref{LLN: coupling, linear subclone and circular subclone}. 
		The same reasoning gives the next equation, which we will use in the proof of \eqref{LLN: coupling, ciruclar subclone and circular population}:
		\begin{equation}\label{LLN: coupling, circular subclone and circular population, long coalescent time, preparation}
			\P\left(\tau_{N_{n}(i),\{i-1,i\}}\wedge\tau_{N_{n}(i),\{i+m-1,i+m\}}\neq{\tau}_{\mathbb{S}^1,N_{n}(i),\{i-1,i\}}\wedge{\tau}_{\mathbb{S}^1,N_{n}(i),\{i+m-1,i+m\}}\right)\le C n^{-1-3\epsilon/2}.\\
		\end{equation}
		
		The proof of \eqref{LLN: coupling, ciruclar subclone and circular population} is similar to the proof of Lemma \ref{Lemma: coupling, linear subclone and linear population}. We define
		$$
		\underline{i}:=\inf\{j\in N_{n}(i): {\tau}_{\mathbb{S}^1,N_n (i), \{i,j\}}\le {\tau}_{\mathbb{S}^1,N_{n}(i),\{i-1,i\}}\wedge{\tau}_{\mathbb{S}^1,N_{n}(i),\{i+m-1,i+m\}}\},
		$$
		and 
		$$
		\overline{i}:=\sup\{j\in N_{n}(i): {\tau}_{\mathbb{S}^1,N_n (i), \{i+m,j\}}\le {\tau}_{\mathbb{S}^1,N_{n}(i),\{i-1,i\}}\wedge{\tau}_{\mathbb{S}^1,N_{n}(i),\{i+m-1,i+m\}}\}.
		$$
		Then
		${L}_{\mathbb{S}^1,n,i,m}\neq {L}_{\mathbb{S}^1,N_{n}(i),i,m}$ only if ${\check{Z}}^{e_{\mathbb{S}^1,n,\underline{i}}}$ or ${\check{Z}}^{e_{\mathbb{S}^1,n,\overline{i}}}$ coalesces with ${Z}^{e_{\mathbb{S}^1,n,j}}$, for some $j\notin N_{n}(i)$, before time ${\tau}_{\mathbb{S}^1,N_{n}(i),\{i-1,i\}}\wedge{\tau}_{\mathbb{S}^1,N_{n}(i),\{i+m-1,i+m\}}$. Therefore, we have
		\begin{align*}
			&\P\left({L}_{\mathbb{S}^1,N_{n}(i),i,m}\neq {L}_{\mathbb{S}^1,n,i,m}\right)\\
			&\qquad\le\P\left(\exists j,t:\ j\notin N_{n}(i),\  t<{\tau}_{\mathbb{S}^1,N_n(i),\{i-1,i\}}\wedge{\tau}_{\mathbb{S}^1,N_n(i),\{i+m-1,i+m\}},\  {\check{Z}}^{e_{\mathbb{S}^1,n,\underline{i}}}_t = {Z}^{e_{\mathbb{S}^1,n,j}}_t\right)\\ 
			&\qquad\qquad+\P\left(\exists j,t:\ j\notin N_{n}(i),\  t<{\tau}_{\mathbb{S}^1,N_n(i),\{i-1,i\}}\wedge{\tau}_{\mathbb{S}^1,N_n(i),\{i+m-1,i+m\}},\  {\check{Z}}^{e_{\mathbb{S}^1,n,\overline{i}}}_t = {Z}^{e_{\mathbb{S}^1,n,j}}_t\right)
		\end{align*}
		We bound the first term on the right hand side and the same argument can be applied to the second term. Writing $i_0=\left\lfloor i-n^{1/3+\epsilon}/2\right\rfloor$, we have
		\begin{align}\label{LLN: coupling, circular subclone and circular population, split}
			&\P\left(\exists j,t:\ j\notin N_{n}(i),\  t<{\tau}_{\mathbb{S}^1,N_n(i),\{i-1,i\}}\wedge{\tau}_{\mathbb{S}^1,N_n(i),\{i+m-1,i+m\}},\  {\check{Z}}^{e_{\mathbb{S}^1,n,\underline{i}}}_t = {Z}^{e_{\mathbb{S}^1,n,j}}_t\right)\nonumber\\
			&\qquad\le \P({\tau}_{\mathbb{S}^1,N_n(i),\{i-1,i\}}\wedge{\tau}_{\mathbb{S}^1,N_n(i),\{i+m-1,i+m\}}\ge n^{-4/3+\epsilon})\nonumber\\
			&\qquad\qquad+\P(\exists j,t:\ j\notin N_{n}(i),t\le n^{-4/3+\epsilon}, {\check{Z}}^{e_{\mathbb{S}^1,n,\underline{i}}}_t = {Z}^{e_{\mathbb{S}^1,n,j}}_t)\nonumber\\
			&\qquad\le \P({\tau}_{\mathbb{S}^1,N_n(i),\{i-1,i\}}\wedge{\tau}_{\mathbb{S}^1,N_n(i),\{i+m-1,i+m\}}\ge n^{-4/3+\epsilon})\nonumber\\
			&\qquad\qquad+\P(\exists j,t:\ j\notin N_{n}(i),t\le n^{-4/3+\epsilon},{\check{Z}}^{e_{\mathbb{S}^1,n,i_0}}_t = {Z}^{e_{\mathbb{S}^1,n,j}}_t)+\P\left(\underline{i}\le i_0\right)\nonumber\\
			&\qquad \le \P({\tau}_{\mathbb{S}^1,N_n(i),\{i-1,i\}}\wedge{\tau}_{\mathbb{S}^1,N_n(i),\{i+m-1,i+m\}}\ge n^{-4/3+\epsilon})\nonumber\\
			&\qquad\qquad+\sum_{j\notin N_n(i)}\P(\exists t\le n^{-4/3+\epsilon}: {\check{Z}}^{e_{\mathbb{S}^1,n,i_0}}_t = {Z}^{e_{\mathbb{S}^1,n,j}}_t)+\P\left(\underline{i}\le i_0\right).
		\end{align}	
		To bound the first term on the right hand side of \eqref{LLN: coupling, circular subclone and circular population, split}, we use \eqref{LLN: coupling, circular subclone and circular population, long coalescent time, preparation} to get
		\begin{align}\label{LLN: coupling, circular subclone and circular population, long coalescent time}
			&\P({\tau}_{\mathbb{S}^1,N_n (i),\{i-1,i\}}\wedge{\tau}_{\mathbb{S}^1,N_n (i),\{i+m-1,i+m\}}> n^{-4/3+\epsilon})\nonumber\\
			&\qquad\le\P\left(\tau_{N_{n}(i),\{i-1,i\}}\wedge\tau_{N_{n}(i),\{i+m-1,i+m\}}\neq{\tau}_{\mathbb{S}^1,N_{n}(i),\{i-1,i\}}\wedge{\tau}_{\mathbb{S}^1,N_{n}(i),\{i+m-1,i+m\}}\right)\nonumber\\
			&\qquad\qquad+\P({\tau}_{N_n(i),\{i-1,i\}}\wedge{\tau}_{N_n(i),\{i+m-1,i+m\}}\ge n^{-4/3+\epsilon})\nonumber\\
			&\qquad\le\P\left(\tau_{N_{n}(i),\{i-1,i\}}\wedge\tau_{N_{n}(i),\{i+m-1,i+m\}}\neq{\tau}_{\mathbb{S}^1,N_{n}(i),\{i-1,i\}}\wedge{\tau}_{\mathbb{S}^1,N_{n}(i),\{i+m-1,i+m\}}\right)\nonumber\\
			&\qquad\qquad+\P({\tau}_{n,\{i-1,i\}}\wedge{\tau}_{n,\{i+m-1,i+m\}}\ge n^{-4/3+\epsilon})\nonumber\\
			&\qquad\le C n^{-1-3\epsilon/2}.
		\end{align}
		To bound the second term on the right hand side of \eqref{LLN: coupling, circular subclone and circular population, split}, note that for two Brownian particles on the circle, it is more likely that they will coalesce through the smaller arc. Therefore, the same argument used to prove \eqref{LLN: coupling, linear subclone and linear population, short coalescent time} gives 
		\begin{align}\label{LLN: coupling, circular subclone and circular population, short coalescent time}
			\P(\exists t\le n^{-4/3+\epsilon}: {\check{Z}}^{e_{\mathbb{S}^1,n,i_0}}_t = {Z}^{e_{\mathbb{S}^1,n,j}}_t)&=\P(\exists t\le n^{-4/3+\epsilon}: {Z}^{e_{\mathbb{S}^1,n,i_0}}_t = {Z}^{e_{\mathbb{S}^1,n,j}}_t)\nonumber\\
			&\le 2\P(\exists t\le n^{-4/3+\epsilon}: Z^{e_{n,i_0}}_t = Z^{e_{n,j}}_t)\nonumber\\
			&\le Cn^{-2-3\epsilon/2}.
		\end{align}
		To bound the last term on the right hand side of \eqref{LLN: coupling, circular subclone and circular population, split}, again, since two Brownian particles on the circle are more likely to coalesce through the smaller arc, using \eqref{LLN: coupling, circular subclone and circular population, long coalescent time} in the third line and the proof of \eqref{LLN: coupling, circular subclone and circular population, short coalescent time} in the last line, we have
		\begin{equation}\label{LLN: coupling, circular subclone and circular population, long range}
			\begin{split}
				\P(\underline{i}\le i_0)&\le 2\P({\tau}_{\mathbb{S}^1,N_n(i),\{i_0, i\}}\le {\tau}_{\mathbb{S}^1,N_n(i),\{i-1,i\}}\wedge{\tau}_{\mathbb{S}^1,N_n (i),\{i+m-1,i+m\}})\\
				&\le 2\P({\tau}_{\mathbb{S}^1,N_n(i),\{i-1,i\}}\wedge{\tau}_{\mathbb{S}^1,N_n(i),\{i+m-1,i+m\}}\ge n^{-4/3+\epsilon})+2\P({\tau}_{\mathbb{S}^1,N_n(i),\{i_0, i\}}\le n^{-4/3+\epsilon})\\
				&\le C n^{-1-3\epsilon/2}+2\P(\tau_{N_n(i),\{i_0, i\}}\le n^{-4/3+\epsilon})\\
				&\le C n^{-1-3\epsilon/2}.
			\end{split}
		\end{equation}
		Then equation \eqref{LLN: coupling, ciruclar subclone and circular population} follows from \eqref{LLN: coupling, circular subclone and circular population, split}, \eqref{LLN: coupling, circular subclone and circular population, long coalescent time}, \eqref{LLN: coupling, circular subclone and circular population, short coalescent time} and \eqref{LLN: coupling, circular subclone and circular population, long range}.
	\end{proof}
	
	We now give the proof for Theorem \ref{Theorem: main result} in the circular case.
	\begin{proof}
		By Lemmas \ref{Lemma: coupling, linear subclone and linear population} and \ref{Lemma: coupling, linear population and circular population}, we have
		$$
		\P\left(\sum_{i=\lfloor n^{1/3+\epsilon}\rfloor}^{\lceil n-n^{1/3+\epsilon}\rceil} {L}_{\mathbb{S}^1,n,i,m}\neq \sum_{i=\lfloor n^{1/3+\epsilon}\rfloor}^{\lceil n-n^{1/3+\epsilon}\rceil} L_{n,i,m}\right)\le Cn^{-3\epsilon/2}.
		$$
		By Remark \ref{Remark: edges effects ignored}, it suffices to show that 
		$$
		n\left(\sum_{i=1}^{\lfloor n^{1/3+\epsilon}\rfloor-1}{L}_{\mathbb{S}^1,n,i,m}+\sum_{i=\lceil n-n^{1/3+\epsilon}\rceil+1}^{n}{L}_{\mathbb{S}^1,n,i,m}\right )\stackrel{\P}{\longrightarrow}0.
		$$
		Since ${L}_{\mathbb{S}^1,n,i,m}$ has the same distribution for all $1\le i\le n$, by Lemmas \ref{Lemma: coupling, linear subclone and linear population}, \ref{Lemma: coupling, linear population and circular population} and Proposition~ \ref{Proposition: expected value, typical behavior}, there exist random variables $X_{n,1},\dots,X_{n,\lfloor n^{1/3+\epsilon}\rfloor-1},X_{n,\lceil n-n^{1/3+\epsilon}\rceil+1},\dots,X_{n,n}$ such that 
		$\P({L}_{\mathbb{S}^1,n,i,m}\neq X_{n,i})\le C n^{-1-3\epsilon/2}$ and $\E[X_{n,i}]=1/n^2$ for all $i$. Then we have
		$$
		\E\left[n\left(\sum_{i=1}^{\lfloor n^{1/3+\epsilon}\rfloor-1}X_{n,i}+\sum_{i=\lceil n-n^{1/3+\epsilon}\rceil+1}^{n}X_{n,i}\right )\right] = 2n^{-2/3+\epsilon},
		$$
		which implies that 
		$$
		n\left(\sum_{i=1}^{\lfloor n^{1/3+\epsilon}\rfloor-1}X_{n,i}+\sum_{i=\lceil n-n^{1/3+\epsilon}\rceil-1}^{n}X_{n,i}\right )\stackrel{\P}{\longrightarrow}0.
		$$
		Also, 
		$$
		\P\left(\sum_{i=1}^{\lfloor n^{1/3+\epsilon}\rfloor-1}{L}_{\mathbb{S}^1,n,i,m}+\sum_{i=\lceil n-n^{1/3+\epsilon}\rceil+1}^{n}{L}_{\mathbb{S}^1,n,i,m}\neq\sum_{i=1}^{\lfloor n^{1/3+\epsilon}\rfloor -1}X_{n,i}+\sum_{i=\lceil n-n^{1/3+\epsilon}\rceil+1}^{n}X_{n,i}\right)\le C n^{-2/3-\epsilon/2}.
		$$
		The proof is completed.
	\end{proof}
	
	
	\bigskip
	\noindent {\bf {\Large Acknowledgments}}
	
	\bigskip
	\noindent The author thanks Professor Jason Schweinsberg for his patient guidance and helpful advice during the planning and development of this article.
	
	
	\bibliographystyle{plain}
	\bibliography{Ref.bib}
\end{document}